\newcommand{\noopsort}[1]{}
\DeclareMathOperator{\NP}{NP}
\DeclareMathOperator{\HP}{HP}
\DeclareMathOperator{\PP}{PP}
\DeclareMathOperator{\Hom}{Hom}
\DeclareMathOperator{\End}{End}
\DeclareMathOperator{\GL}{GL}
\DeclareMathOperator{\val}{val}
\DeclareMathOperator{\tr}{Tr}
\DeclareMathOperator{\com}{Com}
\DeclareMathOperator{\Grass}{Grass}
\DeclareMathOperator{\rank}{rank}
\newcommand{\Z}{\mathbb Z}
\newcommand{\Q}{\mathbb Q}
\newcommand{\Qp}{\Q_p}
\newcommand{\R}{\mathbb R}
\renewcommand{\O}{\mathcal O}
\newcommand{\OK}{\mathcal{O}_K}
\newcommand{\id}{\textrm{id}}
\renewcommand{\c}{\text{\rm c}}
\newcommand{\detp}{\det{'}}
\newcommand{\low}{\text{\rm low}}
\newcommand{\up}{\text{\rm up}}
\newcommand{\DI}{\text{\rm DI}}
\newcommand{\II}{\text{\rm II}}
\DeclareMathOperator{\charpoly}{char}
\newcommand{\charp}{\charpoly'}
\definecolor{purple}{rgb}{0.6,0,0.6}
\def\binom#1#2{\Big(\begin{array}{cc} #1 \\ #2 \end{array}\Big)}
\begin{document}

\newtheorem{theo}{Theorem}[section]
\newtheorem{lem}[theo]{Lemma}
\newtheorem{prop}[theo]{Proposition}
\newtheorem{cor}[theo]{Corollary}
\newtheorem{quest}[theo]{Question}
\newtheorem{conj}[theo]{Conjecture}
\theoremstyle{definition}
\newtheorem{rem}[theo]{Remark}
\newtheorem{ex}[theo]{Example}
\newtheorem{deftn}[theo]{Definition}

\title{p-Adic Stability In Linear Algebra}

\numberofauthors{3}
\author{
\alignauthor Xavier Caruso\\
  \affaddr{Universit\'e Rennes 1}\\
  \affaddr{\textsf{xavier.caruso@normalesup.org}}
\alignauthor David Roe \\
  \affaddr{University of British Columbia}\\
  \affaddr{\textsf{roed.math@gmail.com}}
\alignauthor Tristan Vaccon\\
  \affaddr{Universit\'e Rennes 1}\\
  \affaddr{\textsf{tristan.vaccon@univ-rennes1.fr}}
}

\maketitle

\begin{abstract}
Using the differential precision methods developed previously by the same authors,
we study the $p$-adic stability of standard operations on matrices and vector 
spaces. We demonstrate that lattice-based methods surpass naive methods in many
applications, such as matrix multiplication and sums and intersections of subspaces.
We also analyze determinants, characteristic polynomials and LU factorization using these differential methods.
We supplement our observations with numerical experiments.
\end{abstract}

\category{I.1.2}{Computing Methodologies}{Symbolic and Algebraic Manipulation -- \emph{Algebraic Algorithms}}
\terms{Algorithms, Theory}
\keywords{$p$-adic precision; linear algebra; ultrametric analysis}

%
%

\section{Introduction}

For about twenty years, the use of $p$-adic methods in symbolic 
computation has been gaining popularity. Such methods were 
used to compute composed products of polynomials 
\cite{boston-gonzalez-perdry-schost:05a}, 
to produce hyperelliptic curves of genus $2$ with complex multiplication 
\cite{gaudry-houtmann-weng-ritzenthaler-kohel:06a},
to compute isogenies between elliptic curves \cite{lercier-sirvent:08a} 
and to count points on varieties using $p$-adic 
cohomology theories (\emph{cf.} \cite{kedlaya:01a,lauder:04a} and many
followers).
However, a general framework allowing a precise study of $p$-adic 
precision --- the main issue encountered when computing 
with $p$-adic numbers --- was designed only recently in 
\cite{caruso-roe-vaccon:14a}. 

In \cite{caruso-roe-vaccon:14a}, we advocate the use of lattices
to track the precision of vectors and points on $p$-adic manifolds.
The main result of \emph{loc.~cit.}, recalled in Proposition \ref{prop:precision},
allows for the propagation of precision using differentials.  While
representing precision by lattices carries a high
cost in space and time requirements, the reduced precision loss
can sometimes overwhelm these costs within a larger algorithm.
In this paper, we apply the ideas of \cite{caruso-roe-vaccon:14a} to
certain linear algebraic tasks.

\smallskip

\noindent
{\bf Main Results.}
We give a number of contexts where lattice-based precision methods
outperform the standard coordinate-wise methods.  The two most striking
examples are an analysis of matrix multiplication and of 
sums and intersections of subspaces.  In Proposition \ref{prop:mulmatrix},
we give a formula for the lattice-precision of the product of two matrices.
In Figure \ref{fig:mulmatrix}, we describe the precision loss in multiplying
$n$ matrices, which appears linear in $n$ when using standard methods
and logarithmic in $n$ when using lattice methods.  We also give an
example in Figure \ref{fig:vectorspace} where lattice methods
actually yield an \emph{increase} in precision as the computation
proceeds.

\smallskip

\noindent
{\bf Organization of the paper.}
Section \ref{ssec:latticeprec} recalls the theory of 
precision developed in \cite{caruso-roe-vaccon:14a} and defines
a notion of diffuse precision for comparing to coordinate-wise methods.
In particular, we recall Proposition \ref{prop:precision}, which
allows the computation of precision using differentials.
Proposition \ref{prop:boundLambdaf2} in Section \ref{ssec:boundLambdaf}
is a technical result that will be used to determine the applicability of
Proposition \ref{prop:precision}.

In Section \ref{ssec:mulmatrix}, we analyze matrix multiplication and report
on experiments that demonstrate the utility of lattice-based precision tracking.
Section \ref{ssec:det} finds the precision of the determinant of a
matrix, and Section \ref{ssec:charpoly} applies the resulting formula to
characteristic polynomials.  We define the precision polygon of a matrix, which
gives a lower bound on the precision of the characteristic polynomial.
This polygon lies above the Hodge polygon of the matrix;
we give statistics on the difference and the amount of diffuse precision.
Finally, we apply Proposition \ref{prop:precision} to LU factorization and
describe experiments with lattice-based precision analysis.
Section \ref{ssec:grassgeo} reviews the geometry of Grassmannians,
which we apply in Section \ref{ssec:grassdiff} to differentiating
the direct image, inverse image, sum and intersection maps
between Grassmannians.  We then report in Section \ref{grassimpl}
on tracking the precision of subspace arithmetic in practice.
In the appendix, we give a proof of Proposition \ref{prop:boundLambdaf}.

The code used to make experiments presented in this paper is 
available at \url{https://github.com/CETHop/padicprec}.

\smallskip

\noindent
{\bf Notation.}
Throughout the paper, $K$ will refer to a complete discrete 
valuation field. Usual examples are finite extensions of $\Qp$ and 
Laurent series fields over a field. We denote by $\val : K \twoheadrightarrow \Z \cup 
\{+\infty\}$ the valuation on $K$, by 
$\OK$ the ring of integers and by $\pi \in K$ an element of valuation 
$1$. We let $\Vert \cdot \Vert$ be the norm associated to $\val$.

\medskip

\section{The theory of p-adic precision}
\label{sec:theory}

The aim of this section is to briefly summarize the content of 
\cite{caruso-roe-vaccon:14a} and fill in certain details.

\subsection{Lattices as precision data}
\label{ssec:latticeprec}

In \cite{caruso-roe-vaccon:14a}, we suggest the 
use of lattices to represent the precision of elements in 
$K$-vector spaces.  We shall contrast this approach with the
\emph{coordinate-wise method} used in Sage, 
where the precision of an element is specified by giving the precision
of each coordinate separately and is updated after each basic
operation.

Consider a finite 
dimensional\footnote{The framework of \cite{caruso-roe-vaccon:14a} is 
actually those of Banach spaces. However, we will not need infinite 
dimensional spaces here.} normed vector space $E$ 
defined over $K$. We use the notation $\Vert \cdot \Vert_E$ for the norm 
on $E$ and $B^-_E(r)$ (resp. $B^{\phantom -}_E(r)$) for the open (resp. 
closed) ball of radius $r$ centered at the origin. A \emph{lattice} $L \subset 
E$ is a sub-$\O_K$-module which generates $E$ over $K$. 
Since we are working in a ultrametric world, the balls $B^{\phantom 
-}_E(r)$ and $B^-_E(r)$ are examples of lattices. Actually, lattices 
should be thought of as special neighborhoods of $0$ and therefore are good 
candidates to model precision data. Moreover, as revealed in 
\cite{caruso-roe-vaccon:14a}, they behave quite well under (strictly) 
differentiable maps:

\begin{prop}
\label{prop:precision}
Let $E$ and $F$ be two finite dimensional normed vector spaces over $K$ 
and $f : U \rightarrow F$ be a function defined on an open subset $U$ of 
$E$. We assume that $f$ is differentiable at some point $v_0 \in U$ and 
that the differential $f'(v_0)$ is surjective.
Then, for all $\rho \in (0, 1]$, there exists a positive real
number $\delta$ such that, for all $r \in (0, \delta)$, any lattice
$H$ such that $B^-_E(\rho r) \subset H \subset B^{\phantom -}_E(r)$ 
satisfies:
\begin{equation}
\label{eq:firstorder}
f(v_0 + H) = f(v_0) + f'(v_0) (H).
\end{equation}
\end{prop}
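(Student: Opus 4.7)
The proposition is a $p$-adic counterpart of the inverse function theorem, asserting that the first-order approximation of $f$ is \emph{exact} on sufficiently small lattices. The plan is to reduce both inclusions of (\ref{eq:firstorder}) to a single Lipschitz estimate on the Taylor remainder. Since $f'(v_0) : E \to F$ is a surjective linear map between finite-dimensional normed $K$-vector spaces, pick a bounded linear right-inverse $s : F \to E$ of $f'(v_0)$, set $C = \Vert s \Vert$, and define $g(h) = f(v_0 + h) - f(v_0) - f'(v_0)(h)$. Using (strict) differentiability of $f$ at $v_0$, choose $\delta > 0$ so small that $\Vert g(h_1) - g(h_2) \Vert \leq \lambda \Vert h_1 - h_2 \Vert$ whenever $\Vert h_i \Vert < \delta$, where $\lambda > 0$ is selected in advance with $C\lambda < \rho$.

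For $f(v_0 + H) \subset f(v_0) + f'(v_0)(H)$, given $h \in H \subset B_E(r)$, the estimate applied to $(h,0)$ gives $\Vert s(g(h)) \Vert \leq C\lambda r < \rho r$, so $s(g(h)) \in B^-_E(\rho r) \subset H$. Then
\[
f(v_0 + h) \;=\; f(v_0) + f'(v_0)(h) + g(h) \;=\; f(v_0) + f'(v_0)\bigl(h + s(g(h))\bigr),
\]
and $h + s(g(h)) \in H$ closes this direction.

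The reverse inclusion $f(v_0) + f'(v_0)(H) \subset f(v_0 + H)$ is the main obstacle and requires a $p$-adic Newton / fixed-point argument. Given $w \in H$, the task is to solve $f(v_0 + x) = f(v_0) + f'(v_0)(w)$ for some $x \in H$. Writing $x = w + s(u)$ with $u \in F$ rewrites the equation as $u = -g(w + s(u)) =: T(u)$. The Lipschitz estimate and $\Vert s \Vert \leq C$ give $\Vert T(u_1) - T(u_2) \Vert \leq C\lambda \Vert u_1 - u_2 \Vert$, and $C\lambda < \rho \leq 1$ makes $T$ a strict ultrametric contraction. Starting the iteration at $u_0 = 0$, the ultrametric inequality keeps each $u_n$ inside $B_F(\lambda r)$, so $w + s(u_n) \in B_E(r) \subset B_E(\delta)$ and the Lipschitz bound stays applicable; completeness of $F$ then produces a fixed point $u^\star$ with $\Vert u^\star \Vert \leq \lambda r$, hence $\Vert s(u^\star) \Vert \leq C\lambda r < \rho r$, so $s(u^\star) \in B^-_E(\rho r) \subset H$ and $x = w + s(u^\star) \in H$ as required. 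The delicate points are ensuring that the iterates never leave the ball of validity $B_E(\delta)$ and that the final correction lands in the \emph{open} ball $B^-_E(\rho r)$ rather than merely the closed ball $B_E(r)$; both rest on the quantitative calibration $C\lambda < \rho$, which is the only nontrivial use of the hypothesis $\rho \in (0,1]$.
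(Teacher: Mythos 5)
Your proof is correct and follows essentially the same route as the source from which this paper merely recalls Proposition \ref{prop:precision} (namely \cite{caruso-roe-vaccon:14a}): one bounds a (right-)inverse of the surjective differential, gets the easy inclusion directly from the remainder estimate, and obtains the reverse inclusion by an ultrametric successive-approximation argument whose iterates and final correction are checked to stay inside $H$. One small but important point you handled well: the Lipschitz bound $\Vert g(h_1)-g(h_2)\Vert\leq\lambda\Vert h_1-h_2\Vert$ is exactly \emph{strict} differentiability, which is the intended hypothesis here (the surrounding text speaks of ``(strictly) differentiable maps''), and it is genuinely needed, since differentiability at the single point $v_0$ alone would not make $T$ a contraction.
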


This proposition enables the \emph{lattice method} of tracking precision,
where the precision of the input is specified as a lattice $H$ and precision
is tracked via differentials of the steps within a given algorithm.
The equality sign in Eq.~\eqref{eq:firstorder} shows that this method
yields the optimum possible precision. 
We refer to \cite[\S 4.1]{caruso-roe-vaccon:14a} for a more complete 
exposition.

In \cite{caruso-roe-vaccon:14a}, we also explained that if 
$f$ is locally analytic, then the constant $\delta$ appearing in
Proposition \ref{prop:precision} can be expressed in terms of the
growing function $\Lambda(f)$ defined by
\[
\textstyle \Lambda(f)(v) = 
\log \big( \sup_{h \in B^-_E(e^v)} \Vert 
f(h) \Vert \big)
\]
with the convention that $\Lambda(f)(v) = +\infty$ if $f$ does not
converge on $B^-_E(e^v)$.
We refer to \cite[Proposition 3.12]{caruso-roe-vaccon:14a} for the
precise statement.  We state here the case of
integral polynomial functions. A function $f : E \to F$ is said to be
\emph{integral polynomial} if it is given by multivariate polynomial 
functions with coefficients in $\O_K$ in any (equivalently all) system 
of coordinates associated to a $\OK$-basis of $B_E(1)$.

\begin{prop}
\label{prop:precision2}
We keep the notation of Proposition \ref{prop:precision} and assume 
in addition that $f$ is integral polynomial. Let $C$ be a positive real
number such that $B_F(1) \subset f'(v_0)(B_E(C))$. 
Then Proposition \ref{prop:precision} holds with $\delta = C \cdot
\rho^{-1}$.
\end{prop}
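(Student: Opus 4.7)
The plan is to derive Proposition~\ref{prop:precision2} by specialising Proposition~3.12 of \cite{caruso-roe-vaccon:14a} --- which expresses the admissible $\delta$ of Proposition~\ref{prop:precision} in terms of the growing function $\Lambda$ of $f$ and of a section of $f'(v_0)$ --- to the integral polynomial setting. The advantage of this hypothesis is that the nonlinear remainder of the Taylor expansion of $f$ at $v_0$ has a very explicit, uniform growing function, so the abstract condition of Proposition~3.12 collapses into a quantitative inequality in $C$ and $\rho$, which rearranges to the announced value of $\delta$.

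The first step is to write
\[
f(v_0+h)=f(v_0)+f'(v_0)(h)+g(h),
\]
and to observe that $g$ is itself integral polynomial, with no constant or linear term. Indeed, after choosing a coordinate system in which $f$ has coefficients in $\OK$ and for which $v_0$ is an integral point, the Taylor coefficients at $v_0$ remain in $\OK$ since the combinatorial factors arising from the expansion are integers. In this coordinate system and for $h\in B_E^-(e^v)$ with $v\le 0$, the integrality of the coefficients together with the absence of constant and linear terms give immediately $\|g(h)\|\le\|h\|^2$, and hence $\Lambda(g)(v)\le 2v$.

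The second step converts the hypothesis $B_F(1)\subset f'(v_0)(B_E(C))$ into a quantitative bound on the inverse of $f'(v_0)$: by homogeneity every $y\in F$ admits some preimage $x\in E$ with $\|x\|\le C\|y\|$, and choosing such preimages defines a (not necessarily linear) section $\sigma\col F\to E$ of $f'(v_0)$ whose growing function satisfies $\Lambda(\sigma)(v)\le v+\log C$. Feeding the two estimates $\Lambda(g)(v)\le 2v$ and $\Lambda(\sigma)(v)\le v+\log C$ into Proposition~3.12 of \cite{caruso-roe-vaccon:14a} reduces the condition of validity to a simple inequality of linear functions of $\log(\rho r)$, which one solves to obtain the announced value $\delta=C\rho^{-1}$. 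Both inclusions of~\eqref{eq:firstorder} then follow: the direction $\subset$ from the bound on $g$ applied directly, and the direction $\supset$ from the Newton-type fixed-point iteration $u\mapsto -\sigma(g(h'+u))$, which is a strict contraction on $B_E^-(\rho r)$ under the same inequality.

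The main obstacle is the quantitative bookkeeping: one must verify that the inequality produced by Proposition~3.12 unwinds to $\delta=C\rho^{-1}$ with the correct constants, taking into account the slope $2$ of $\Lambda(g)$, the additive shift $\log C$ of $\Lambda(\sigma)$, and the mixed open/closed ball condition $B_E^-(\rho r)\subset H\subset B_E(r)$. None of these steps are individually difficult, but as with many ultrametric estimates care is required at the boundary so as not to drop a factor.
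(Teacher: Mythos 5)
Your overall route is the intended one: the paper gives no proof of Proposition \ref{prop:precision2}, presenting it as the specialization of Proposition 3.12 of \cite{caruso-roe-vaccon:14a} to integral polynomial maps, and the ingredients you assemble (the decomposition $f(v_0+h)=f(v_0)+f'(v_0)(h)+g(h)$ with $g$ of order $\geq 2$ and integral coefficients, hence $\Lambda(g)(v)\leq 2v$ for $v\leq 0$, plus a section of $f'(v_0)$ of norm at most $C$) are exactly the right ones. The problem is that the step you dismiss as ``quantitative bookkeeping'' is the only substantive content here, and it does not come out as you assert. Your own estimates give the condition $C\cdot r^2<\rho r$ (the error $g(h)$, of norm at most $r^2$ when $r\leq 1$, must be absorbed into $f'(v_0)\big(B^-_E(\rho r)\big)$, which only contains a ball of radius about $\rho r/C$), i.e.\ $r<\rho\,C^{-1}$ --- which is also what the paper's later uses of the proposition require (the condition $N>\min(a_r,b_t)$ after Proposition \ref{prop:mulmatrix}, and the condition on $\rho/r$ in the LU section). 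Claiming this ``rearranges to $\delta=C\rho^{-1}$'' is not a rearrangement: since in the relevant situations $C\geq 1\geq\rho$, the value $C\rho^{-1}$ is at least $1$, and the statement with that threshold is false. For instance take $f(x,y)=xy$ at $v_0=(\pi,\pi)$, so $C=\Vert\pi\Vert^{-1}>1$, and take $\rho=r=1$, $H=\OK\times\OK$; then $f(v_0+H)-f(v_0)$ contains the unit $2\pi+1$, while $f'(v_0)(H)=\pi\OK$. So you cannot keep your estimates and reach the printed constant; the honest output of your argument is $\delta=\rho\,C^{-1}$ (the displayed $C\cdot\rho^{-1}$ has the constants inverted), and by waving the computation through you neither prove the statement nor detect the discrepancy.

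A secondary gap: you ``choose a coordinate system in which $f$ has coefficients in $\OK$ and for which $v_0$ is an integral point''. The definition of integral polynomial ties the admissible coordinates to an $\OK$-basis of $B_E(1)$, so you are not free to arrange $v_0\in B_E(1)$ by a change of coordinates. Integrality of the Taylor coefficients of $g$ at $v_0$ --- hence the bound $\Vert g(h)\Vert\leq\Vert h\Vert^2$ and $\Lambda(g)(v)\leq 2v$ --- genuinely requires $\Vert v_0\Vert\leq 1$ (an hypothesis that is implicit in all the paper's applications and should be stated); otherwise the slope-$2$ bound degrades by a factor of order $\max(1,\Vert v_0\Vert)^{\deg f}$. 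Likewise the bound on $g$ is only valid for $r\leq 1$, so the threshold obtained from your estimates must in general be capped at $1$.
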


In \cite[Appendix A]{caruso-roe-vaccon:14a}, the theory is extended 
to manifolds over $K$, where the precision 
datum at some point $x$ is a lattice in the tangent space at $x$. 
Propositions \ref{prop:precision} and \ref{prop:precision2} have
analogues obtained by working in charts.  We use this extension
to compute with vector spaces in \S \ref{sec:vectorspaces}.

We will use the following definition in contrasting lattice and coordinate-wise methods. 
Suppose $E$ is equipped with a basis $(e_1, \ldots, e_n)$ and write $\pi_i : E \to Ke_i$ for the projections.

\begin{deftn} \label{def:diffused}
Let $H \subset E$ be a lattice. The number of 
\emph{diffused digits of precision} of $H$ is the length of
$H_0/H$ where $H_0 = \pi_1(H) \oplus \cdots \oplus \pi_n(H)$.
\end{deftn}
If $H$ represents the actual precision of some object, then
$H_0$ is the smallest diagonal lattice containing $H$.  Since
coordinate-wise methods cannot yield a precision better than $H_0$,
$k$ provides a lower bound on the number of $p$-adic digits
gained by lattice methods over standard methods.

\subsection{A bound on a growing function}
\label{ssec:boundLambdaf}

In the next sections, we will compute the derivative of several standard 
operations and sometimes give a simple expression in term 
of the input and the output. In other words, the function $f$ modeling
such an operation satisfies a differential equation of the form
$f' = g \circ (f, \id)$  
where $g$ is a given --- and hopefully rather simple --- function. The 
aim of this subsection is to study this differential equation and to 
derive from it certain bounds on the growing function $\Lambda(f)$. 
\emph{We will assume that $K$ has 
characteristic $0$.}

Let $E, F$ and $G$ be finite-dimensional normed vector spaces with
$U \subseteq E$ and $V \subseteq F$ and $W \subset G$ open subsets.
Generalizing the setting above, we consider the 
differential equation:
\begin{equation}
\label{eq:diffequah}
f' = g \circ (f, h).
\end{equation}
Here $g : V \times W \to \Hom(E, F)$ and $h : U \to W$ are known
locally analytic functions and $f : U \to V$ is the 
unknown locally analytic function. 
In what follows, we always assume that $V$ and $W$ contain the origin, 
$f(0) = 0$, $h(0) = 0$ and $g(0) \neq 0$. These assumptions are harmless 
for two reasons: first, we can always shift $f$ and $h$ (and $g$ 
accordingly) so that they both vanish at $0$, and second, in order to 
apply Proposition \ref{prop:precision2} the derivative $f'(0)$ 
needs to be surjective and therefore \emph{a fortiori} nonzero.

We assume that we are given in addition two nondecreasing convex 
functions $\Lambda_g$ and $\Lambda_h$ such that $\Lambda(g) \leq 
\Lambda_g$ and $\Lambda(h) \leq \Lambda_h$. We suppose further that 
there exists $\nu$ such that $\Lambda_g$ is constant on the interval 
$({-}\infty, \nu]$\footnote{We note that this assumption is fulfilled if 
we take $\Lambda_g = \Lambda(g)$ because we have assumed that $g(0)$ 
does not vanish.}. We introduce the functions $\tau_\nu$ and $\Lambda_f$ 
defined by:

\vspace{-0.2cm}

\begin{align*}
\tau_\nu(x) &= \smash{\left\{\begin{array}{ll} x & \mbox{ if } x \le \nu, \\
+\infty & \mbox{ otherwise;} \end{array}\right.} \\[8pt]
\mbox{and } \Lambda_f(x) &= \tau_\nu \circ (\id + \Lambda_g \circ \Lambda_h)(x + \alpha),
\end{align*}
where $\alpha$ is a real number satisfying $\Vert n! \Vert \geq 
e^{-\alpha n}$ for all $n$. If $p$ is the characteristic of the residue field,
a suitable value for $\alpha$ is $\alpha = - 
\frac p {p-1} \cdot \log \Vert p \Vert$ if $p > 0$ and $\alpha = 0$ if $p = 0$.
The next Proposition is proved in Appendix \ref{app:proof}.

\begin{prop}
\label{prop:boundLambdaf} \label{PROP:BOUNDLAMBDAF}
We have $\Lambda(f) \leq \Lambda_f$.
\end{prop}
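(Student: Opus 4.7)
The plan is to expand $f$, $g$, and $h$ as power series at the origin and derive a recursion on the Taylor coefficients of $f$. Since $f$ is locally analytic with $f(0)=0$, write $f = \sum_{n \geq 1} f_n$ where $f_n : E \to F$ is homogeneous of degree $n$, and similarly $h = \sum_{n \geq 1} h_n$; the map $g : V \times W \to \Hom(E,F)$ expands as $g = \sum_m g_m$. Set $G(x) := g(f(x), h(x)) = \sum_{m \geq 0} G_m(x)$ with each $G_m$ homogeneous of degree $m$. Applying $f'(x) = G(x)$ to the vector $x$ itself and using Euler's identity $f'_n(x)(x) = n\, f_n(x)$ yields the key recursion
\[
n\, f_n(x) \;=\; G_{n-1}(x)(x) \qquad (n \geq 1).
\]
Since $G_{n-1}$ depends only on the $f_k$ with $k \leq n-1$ together with the coefficients of $g$ and $h$, this is a genuine recursion in $n$.

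Next I would bound the Gauss norms of the $f_n$ by strong induction on $n$. The crucial arithmetic input is the inequality $\|n\|^{-1} \leq e^{\alpha n}$, which follows from $\|n!\| \geq e^{-\alpha n}$ and $\|(n-1)!\| \leq 1$; together with the usual non-archimedean relation between the $\|G_m\|$ and the Gauss norm of $G$ on a ball, and with the hypotheses $\Lambda(g) \leq \Lambda_g$ and $\Lambda(h) \leq \Lambda_h$, this should yield by induction that each $f_n$ is controlled on the ball of radius $e^v$ by a quantity expressible via $\Lambda_g(\Lambda_h(v+\alpha))$. The shift $v \mapsto v + \alpha$ is precisely what absorbs the $\|n\|^{-1}$ factor accumulated along the recursion.

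Summing the Taylor expansion and applying the ultrametric inequality should then give $\Lambda(f)(v) \leq (v+\alpha) + \Lambda_g(\Lambda_h(v+\alpha))$ whenever the right-hand side is at most $\nu$; for larger $v$ one has $\Lambda_f(v) = +\infty$ and there is nothing to prove. The main obstacle is the circular dependence: bounding $f_n$ requires bounding $G_{n-1}$, which itself involves the lower-degree pieces of $f$. This is resolved by the induction on $n$, but the induction only closes cleanly because $\Lambda_g$ is constant on $(-\infty, \nu]$: as long as the running bound on $\Lambda(f)$ stays below $\nu$, the argument fed into $\Lambda_g$ does not grow at each step, so the recursion remains stable and does not diverge. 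This is also the reason the conclusion must be cut off by $\tau_\nu$.
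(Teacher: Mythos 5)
The skeleton of your plan (expand in homogeneous components, use the Euler identity $n\,f_n(x)=G_{n-1}(x)(x)$ with $G=g\circ(f,h)$ to get a recursion, induct on $n$, cut off at $\nu$ using the constancy of $\Lambda_g$ on $({-}\infty,\nu]$) does match the structure of the paper's proof. But there is a genuine gap at the step you call "the crucial arithmetic input". Using $\Vert n\Vert^{-1}\leq e^{\alpha n}$ \emph{at each level} of the recursion does not close the induction: if your inductive hypothesis is $\Vert f_k\Vert\leq e^{-(a-\alpha)k+b}$, then the step for $f_n$ produces a factor $e^{\alpha n}$ on top of the product $\prod_i\max(\Vert f_{n_i}\Vert,\Vert h_{n_i}\Vert)$ with $\sum n_i=n-1$, and you end up with a bound of the shape $e^{-(a-\alpha)n+\alpha n+\text{const}}$ --- the slope degrades by an extra $\alpha$ (equivalently, the constant grows like $\alpha n$) at every level, so the losses compound along the recursion tree instead of being absorbed by a single shift $v\mapsto v+\alpha$. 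The point of the hypothesis $\Vert n!\Vert\geq e^{-\alpha n}$ is not the weaker per-step bound on $\Vert n\Vert^{-1}$ you derive from it; the proof needs the renormalization $u_r=\Vert r!\,f_r\Vert$ (equivalently, building $\Vert k!\Vert^{-1}$ into the inductive hypothesis). Then $\Vert(r{+}1)!\,f_{r+1}\Vert\leq\Vert r!\Vert\sup_{m,(n_i)}\Vert g_m\Vert\prod_i\max(\Vert f_{n_i}\Vert,\Vert h_{n_i}\Vert)$, and because $r!/(n_1!\cdots n_m!)$ is a multinomial coefficient, hence an integer of norm at most $1$, the factorials distribute across the product and the clean recursion $u_{r+1}\leq\sup\Vert g_m\Vert\prod\max(u_{n_i},\Vert n_i!\,h_{n_i}\Vert)$ closes with the exact exponential bound $u_r\leq e^{-ar+b}$; only at the very end does one divide by $\Vert r!\Vert\geq e^{-\alpha r}$ \emph{once}, which is what produces the constant shift by $\alpha$ in $\Lambda_f$.

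A secondary point: the bound you invoke as "the usual non-archimedean relation" between $\Vert G_{n-1}\Vert$ and the data $\Vert g_m\Vert$, $\Vert f_{n_i}\Vert$, $\Vert h_{n_i}\Vert$ is not free in this setting. Since each $g_m$ is the diagonal restriction of a symmetric $m$-linear form, substituting the expansions of $f$ and $h$ produces off-diagonal values of that form multiplied by multinomial coefficients, and one has to show these are bounded by the norm of the diagonal restriction itself (naive polarization would cost a factor $\Vert m!\Vert^{-1}$, which is fatal here). The paper proves this as a separate lemma (by an induction introducing an auxiliary parameter $\lambda$ and comparing coefficients of the resulting polynomial); you should either prove such a statement or cite it explicitly, because the integrality of the same multinomial coefficients is exactly what makes both this composite bound and the factorial renormalization above work. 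Your explanation of why the constancy of $\Lambda_g$ below $\nu$ is needed is the right intuition, but in the actual argument it is used to verify the precise hypothesis $b\geq a+\Lambda(g)\bigl(\max(b,\Lambda(h)(a))\bigr)$: when the maximum is $b$ one needs $\Lambda_g(b)\leq\Lambda_g\circ\Lambda_h(a)$, which is where $b\leq\nu$ and the constancy enter.
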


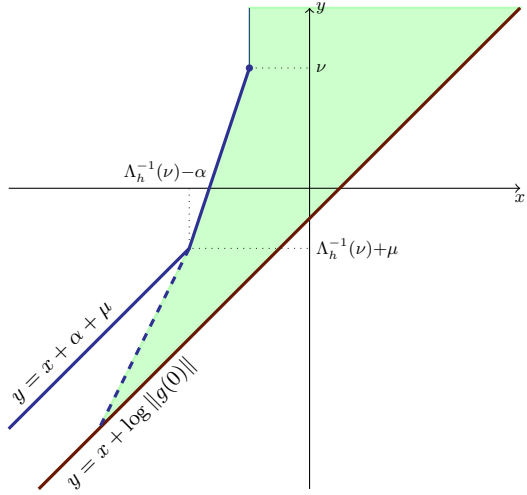
\begin{figure}
\null \hfill
\begin{tikzpicture}[scale=0.8]
\draw[thick,green!30,fill=green!20] 
   (-2,-1)--(-1,2)--(-1,3)--(3.5,3)
 --(2.5,2)--(-3.5,-4)--cycle;
\draw[->] (-5,0)--(3.5,0);
\draw[->] (0,-5)--(0,3);
\node[below, scale=0.75] at (3.5,0) { $x$ };
\node[right, scale=0.75] at (0,3) { $y$ };
\draw[black!80,dotted] (-1,2)--(0,2);
\node[right,scale=0.75] at (0,2) { $\nu$ };
\draw[black!80,dotted] (-2,0)--(-2,-1)--(0,-1);
\node[above,scale=0.75] at (-2.4,0) { $\Lambda_h^{-1}(\nu){-}\alpha$ };
\node[right,scale=0.75] at (0,-1) { $\Lambda_h^{-1}(\nu){+}\mu$ };
\draw[Blue,very thick] (-5,-4)--(-2,-1)--(-1,2);
\fill[Blue] (-1,2) circle (0.06);
\draw[Blue] (-1,2)--(-1,3);
\draw[Sepia, very thick] (-4.5,-5)--(3.5,3);
\draw[Blue, very thick,dashed] (-2,-1)--(-3.5,-4);
\node[below right,rotate=45,scale=0.9] at (-4.3,-4.8) 
  { $y = x + \log \Vert g(0) \Vert$ };
\node[above right,rotate=45,scale=0.9] at (-4.8,-3.8) 
  { $y = x + \alpha + \mu$ };
\end{tikzpicture}
\hfill \null

\vspace{-0.1cm}

\caption{Admissible region for the graph of $\Lambda(f)$}
\label{fig:area}

\end{figure}

Figure \ref{fig:area} illustrates Proposition \ref{prop:boundLambdaf}. 
The blue plain line represents the graph of the function $\Lambda_f$. A 
quick computation shows that, on a neighborhood of ${-}\infty$, this 
function is given by $\Lambda_f(x) = x + \alpha + \mu$
where $\mu$ is the value that $\Lambda_g$ takes on the interval 
$({-}\infty, \nu]$. Proposition \ref{prop:boundLambdaf} says that the
graph of $\Lambda(f)$ lies below the plain blue line. Moreover, we remark
that the Taylor expansion of $f(z)$ starts with the term
$g(0) z$. Hence, on a neighborhood of ${-}\infty$, we have 
$\Lambda(f)(x) = x + \log \Vert g(0) \Vert$. Using convexity, we 
get:
$\Lambda(f)(x) \geq x + \log \Vert g(0) \Vert$
for all $x \in \R$.  In other words, the graph of $\Lambda(f)$ lies above the brown line.
Furthermore, we know that the slopes of $\Lambda(f)$ are all integral
because $f$ is locally analytic. Hence, $\Lambda(f)$ cannot lie above
the dashed blue line defined as the line of slope $2$ passing through
the first break point of the blue plain line, which has coordinates
$(y_0 - \alpha - \mu, y_0)$ with $y_0 = \min(\Lambda_h^{-1}(\nu) + \mu, 
\nu)$. As a conclusion, we have proved that the graph of $\Lambda(f)$ 
must coincide with the brown line until it meets the dashed blue line 
and then has to stay in the green area.

As a consequence, we derive the following 
proposition, which can be combined with Proposition 3.12 of 
\cite{caruso-roe-vaccon:14a}. Following \cite{caruso-roe-vaccon:14a},
if $\varphi$ is a convex function and $v \in \R$, we define
$$\varphi_{\geq v} : x \mapsto \inf_{y \geq 0} \big( \varphi(x+y)
- vy \big).$$
It is the highest convex function with $\varphi_{\geq v} \leq \varphi$ and 
$\varphi'_{\geq v} \geq v$.

\begin{prop}
\label{prop:boundLambdaf2}
Keeping the above notation, we have:
$$\Lambda(f)_{\geq 2} (x) \leq 2(x + \alpha + \mu) -
\min(\Lambda_h^{-1}(\nu) + \mu, \: \nu)$$
for all $x \leq \min(\Lambda_h^{-1}(\nu) - \alpha, \: \nu - \mu - \alpha)$.
\end{prop}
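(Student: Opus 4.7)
The plan is to exploit the definition $\Lambda(f)_{\geq 2}(x) = \inf_{y \geq 0}\bigl(\Lambda(f)(x+y) - 2y\bigr)$ by testing the single value of $y$ suggested by the geometric picture in Figure \ref{fig:area}. Concretely, I introduce the abscissa $x_0 = \min(\Lambda_h^{-1}(\nu) - \alpha,\ \nu - \mu - \alpha)$ and ordinate $y_0 = \min(\Lambda_h^{-1}(\nu) + \mu,\ \nu)$ of the breakpoint of $\Lambda_f$. Since $x_0 = y_0 - \alpha - \mu$, the desired inequality rewrites as $\Lambda(f)_{\geq 2}(x) \leq y_0 + 2(x - x_0)$ for all $x \leq x_0$, which is precisely the equation of the dashed blue line in the figure.

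The first step is to compute $\Lambda_f(x_0)$ and show it equals $y_0$. From the definition of $x_0$ one has $x_0 + \alpha \leq \Lambda_h^{-1}(\nu)$, hence $\Lambda_h(x_0 + \alpha) \leq \nu$. The hypothesis that $\Lambda_g$ is constant equal to $\mu$ on $({-}\infty, \nu]$ then gives $(\id + \Lambda_g \circ \Lambda_h)(x_0 + \alpha) = (x_0 + \alpha) + \mu = y_0$, and since $y_0 \leq \nu$ the truncation $\tau_\nu$ acts trivially, producing $\Lambda_f(x_0) = y_0$. Combined with Proposition \ref{prop:boundLambdaf}, this yields $\Lambda(f)(x_0) \leq y_0$.

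The second step is to plug the admissible value $y := x_0 - x \geq 0$ into the infimum defining $\Lambda(f)_{\geq 2}$. For any $x \leq x_0$ this gives
\[
\Lambda(f)_{\geq 2}(x) \;\leq\; \Lambda(f)(x_0) - 2(x_0 - x) \;\leq\; y_0 + 2(x - x_0) \;=\; 2(x + \alpha + \mu) - y_0,
\]
which is exactly the stated bound.

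No serious obstacle appears once Proposition \ref{prop:boundLambdaf} is granted: the argument is merely the observation that the dashed line of slope $2$ passing through the breakpoint $(x_0, y_0)$ of $\Lambda_f$ gives an explicit convex majorant of slope $\geq 2$ lying above $\Lambda(f)$ on the half-line $x \leq x_0$. The only care required is the identification $\Lambda_f(x_0) = y_0$ across the two branches of the minima defining $x_0$ and $y_0$, both of which reduce to the inequality $y_0 \leq \nu$ that is built into the very definition of $y_0$.
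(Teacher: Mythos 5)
Your proposal is correct and follows essentially the same route as the paper: the paper's one-line proof is precisely the observation that the slope-$2$ line through the breakpoint $(y_0-\alpha-\mu,\,y_0)$ of $\Lambda_f$ bounds $\Lambda(f)_{\geq 2}$, and you make this rigorous by checking $\Lambda_f(x_0)=y_0$ and testing $y=x_0-x$ in the infimum defining $\Lambda(f)_{\geq 2}$, with Proposition \ref{prop:boundLambdaf} supplying $\Lambda(f)(x_0)\leq y_0$. Your write-up has the small merit of not needing the informal integrality-of-slopes discussion preceding Figure \ref{fig:area}, since the transform $\varphi_{\geq 2}$ already enforces the slope condition.
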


\begin{proof}
The inequality follows from the fact that $y = 2(x + \alpha + \mu) - y_0$ is the equation of 
the dashed blue line.
\end{proof}

\begin{rem}
In certain situations,
it may happen that the function $f$ is solution of a simpler 
differential equation of the form $f' = g \circ f$. If this holds, 
Proposition \ref{prop:boundLambdaf2} gives the bound $\Lambda(f)_{\geq 
2} (x) \leq 2(x + \alpha + \mu) - \nu$ for $x \leq \nu - \mu - \alpha$.

Beyond this particular case, we recommend choosing the 
function $h$ by endowing $F$ with the second norm $\Vert x \Vert'_F = 
e^\mu \cdot \Vert x \Vert_F$ ($x \in F$) and taking $h : (F, \Vert 
\cdot \Vert_F) \to (F, \Vert \cdot \Vert'_F)$ to be the identity on the
underlying vector spaces. The function $\Lambda(h) : \R \to \R$ then maps $x$ to $x + 
\mu$ and we can choose $\Lambda_h = \Lambda(h)$.
\end{rem}

\section{Matrices}
\label{sec:matrices}

Let $M_{m,n}(K)$ denote the space of $m \times n$ matrices over $K$.  
We will repeatedly use the Smith decomposition for $M \in M_{m,n}(K)$,
which is $M = U_M \cdot \Delta_M \cdot V_M$
with $U_M$ and $V_M$ unimodular and $\Delta_M$ diagonal.  Write $\sigma_i(M)$
for the valuation of the $(i,i)$-th entry of $\Delta_M$, and by convention set
$\sigma_i(M) = +\infty$ if $i > \min(m,n)$.  Order the $\sigma_i(M)$ so that $\sigma_i(M) \le \sigma_{i+1}(M)$.

\subsection{Multiplication}
\label{ssec:mulmatrix}

To begin with, we want to study the behavior of the precision when 
performing a matrix multiplication. Let $r$, $s$ and $t$ be three 
positive integers and assume that we want to multiply a matrix $A \in 
M_{r,s}(K)$ by a matrix $B \in M_{s,t}(K)$. This operation is of course 
modeled by the integral polynomial function:
$$\begin{array}{rcl}
\mathcal P_{r,s,t} : \quad M_{r,s}(K) \times M_{s,t}(K) & \to & 
M_{r,t}(K) \smallskip \\
(A,B) & \mapsto & AB.
\end{array}$$
According to Proposition \ref{prop:precision}, the behavior of the precision when 
computing $AB$ is governed by $\mathcal P'_{r,s,t}(A,B)$, the linear mapping that takes a pair 
$(dA,dB)$ to $A \cdot dB + dA \cdot B$.

To fix ideas, let us assume from now that the entries of $A$ and $B$ all 
lie in $\O_K$ and are known at the same precision $O(\pi^N)$. In order 
to apply Propositions \ref{prop:precision} and \ref{prop:precision2}, we then need to compute the image 
of the standard lattice $\mathcal L_0 = M_{r,s}(\O_K) \times 
M_{s,t}(\O_K)$ under $\mathcal P'_{r,s,t}(A,B)$. It is of course 
contained in $M_{r,t}(\O_K)$; this reflects the obvious fact that each 
entry of the product $AB$ is also known with precision at least $O(\pi^N)$. 
Nonetheless, it may happen that the above inclusion is strict, meaning 
that we are \emph{gaining} precision in those cases.

Set $a_i = \sigma_i(A)$ and $b_i = \sigma_i(B)$, and define $M_{r,t}((a_i),(b_j))$ 
as the sublattice of $M_{r,t}(\O_K)$ consisting of matrices $M = (M_{i,j})$ 
such that $\val(M_{i,j}) \geq \min(a_i,b_j)$ for all $(i,j)$.  

\begin{prop}
\label{prop:mulmatrix}
With the above notation, we have
\[
\begin{array}{rl}
& \mathcal P'_{r,s,t}(A,B)(\mathcal L_0)
= U_A \cdot M_{r,t}((a_i),(b_j)) \cdot V_B \smallskip \\
\text{and} &
\text{\emph{length}}\Big(\frac{M_{r,t}(\O_K)}{\mathcal P'_{r,s,t}(A,B)(\mathcal L_0) }
\Big) =
\displaystyle 
\sum_{i,j} \min(a_i,b_j)
\end{array}
\]
\end{prop}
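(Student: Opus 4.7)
The plan is to reduce to the diagonal case via the Smith decompositions of $A$ and $B$, and then to analyze the resulting map coordinate by coordinate. Writing $A = U_A \Delta_A V_A$ and $B = U_B \Delta_B V_B$, the derivative becomes
\[
\mathcal P'_{r,s,t}(A,B)(dA, dB) = U_A \Delta_A V_A \cdot dB + dA \cdot U_B \Delta_B V_B.
\]
The substitution $dA' = U_A^{-1} \cdot dA \cdot U_B$ and $dB' = V_A \cdot dB \cdot V_B^{-1}$ is an $\O_K$-linear automorphism of $\mathcal L_0$ (its factors being unimodular), so the image of $\mathcal L_0$ is unchanged. The derivative then factors as
\[
U_A \cdot \bigl( \Delta_A \cdot dB' + dA' \cdot \Delta_B \bigr) \cdot V_B,
\]
reducing the task to computing the image of the simpler map $(dA',dB') \mapsto \Delta_A dB' + dA' \Delta_B$ acting on $\mathcal L_0$.

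Next I would compute this image entrywise. Since $\Delta_A$ and $\Delta_B$ are diagonal with valuations $a_i$ and $b_j$ respectively, the $(i,j)$-entry of $\Delta_A dB' + dA' \Delta_B$ equals $\pi^{a_i} \cdot dB'_{ij} + \pi^{b_j} \cdot dA'_{ij}$, with the convention that the first term is $0$ when $i > s$ (so $a_i = +\infty$) and the second when $j > s$ (so $b_j = +\infty$). As $(dA',dB')$ ranges over $\mathcal L_0$, the entries $dA'_{ij}$ and $dB'_{ij}$ are independent across distinct $(i,j)$, so each $(i,j)$-coordinate independently ranges over
$\pi^{a_i} \O_K + \pi^{b_j} \O_K = \pi^{\min(a_i,b_j)} \O_K$. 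Thus the image is exactly $M_{r,t}((a_i),(b_j))$, and multiplying back by $U_A$ on the left and $V_B$ on the right yields the claimed formula for $\mathcal P'_{r,s,t}(A,B)(\mathcal L_0)$.

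Finally, because $U_A$ and $V_B$ are unimodular they preserve $M_{r,t}(\O_K)$, so
\[
\mathrm{length}\!\left( \frac{M_{r,t}(\O_K)}{U_A \cdot M_{r,t}((a_i),(b_j)) \cdot V_B} \right) = \mathrm{length}\!\left( \frac{M_{r,t}(\O_K)}{M_{r,t}((a_i),(b_j))} \right),
\]
and the right-hand side splits as a direct sum of local factors $\O_K / \pi^{\min(a_i,b_j)} \O_K$, giving the claimed total $\sum_{i,j} \min(a_i,b_j)$. The only real obstacle is dimensional bookkeeping in the degenerate cases where $r > s$ or $t > s$, which make some $a_i$ or $b_j$ infinite; but as indicated above, every such case collapses consistently under the convention $\pi^{+\infty}=0$.
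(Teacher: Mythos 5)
Your proof is correct and follows essentially the same route as the paper: conjugate by the unimodular factors of the Smith decompositions (which preserve $\mathcal L_0$), reduce to the map $(dA',dB')\mapsto \Delta_A dB' + dA'\Delta_B$, and read off the image and the length of the quotient. The only cosmetic difference is that you carry out the final step entrywise, where the paper argues by rows and columns; both give $M_{r,t}((a_i),(b_j))$ identically.
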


\begin{proof}
We write $A \cdot dB + dA \cdot B = U_A \cdot M \cdot V_B$ with
$$M = \Delta_A \cdot V_A \cdot dA \cdot V_B^{-1} 
+ U_A^{-1} \cdot dB \cdot U_B \cdot \Delta_B.$$
When $dA$ varies in $M_{a,b}(\O_K)$ so does $V_A \cdot dA \cdot V_B^{-1}$
and therefore the first summand in $M$ varies in the subspace of 
$M_{r,t}(\O_K)$ consisting of matrices whose entries on the $i$-th
row have valuation at least $a_i$. Arguing similarly for the second
summand, we deduce the first statement of the Proposition. The second
statement is now clear.
\end{proof}

From the perspective of precision, the second statement of Proposition 
\ref{prop:mulmatrix} means that the computation of $AB$ gains 
$\sum_{i,j} \min(a_i,b_j)$ significant digits in absolute 
precision\footnote{We
note that, on the other side, the valuation of the entries of $AB$ may 
increase, meaning that we are also losing some significant digits if we 
are reasoning in relative precision.} as soon as $N > \min(a_r, b_t)$ 
(\emph{cf.} Proposition \ref{prop:precision2}). However, many of these digits are 
diffused in the sense of Definition \ref{def:diffused}.
To change bases in order to make this increased precision visible
with coordinates, write 
$AB = U_A \cdot P \cdot V_B$ with
$P = \Delta_A \cdot V_A \cdot U_B \cdot \Delta_B$.
Tracking precision in the usual way, the $(i,j)$-th entry
of $P$ is known at precision $O(\pi^{N + \min(a_i,b_j)})$.  Multiplication by
$U_A$ and $V_B$ then diffuses the precision across the entries of $AB$.

\smallskip

We now consider the impact of tracking this diffuse precision.
Although the benefit is not substantial for
a single product of matrices, it accumulates as we multiply a large
number of matrices.  We illustrate this phenomenon with the following simple
example.

\noindent\hrulefill

\noindent {\bf Algorithm 1:} {\tt example\_product}

\noindent{\bf Input:} a list $(M_1, \ldots, M_n)$ of square matrices
of size $d$.

\smallskip

\noindent 1.\ Set $P$ to the identity matrix of size $d$

\noindent 2.\ {\bf for} $j=1,\dots,n$ {\bf do} {\bf compute} $P = P M_i$

\noindent 3.\ {\bf return} the top left entry of $P$

\vspace{-1ex}\noindent\hrulefill

\medskip

\noindent
Figure \ref{fig:mulmatrix} compares the number of significant digits in 
\emph{relative} precision we are losing on the output of Algorithm 1 
when we are using, on the one hand, a standard coordinate-wise track of 
precision and, on the other hand, a lattice-based method to handle 
precision.
\begin{figure}
\begin{center}
\renewcommand{\arraystretch}{1.2}
\begin{tabular}{|c|c|c|c|}
\hline
\multirow{2}{*}{\hspace{0.2cm}$d$\hspace{0.2cm}} & 
\multirow{2}{*}{\hspace{0.2cm}$n$\hspace{0.2cm}} & 
\multicolumn{2}{|c|}{Average loss of precision} \\
\cline{3-4}
& & Coord-wise method & Lattice method \\
\hline 
$2$ & $10$ & $\hphantom{00}2.8$ & $2.4$ \\
$2$ & $100$ & $\hphantom{0}16.7$ & $5.0$ \\
$2$ & $1000$ & $157.8$ & $7.9$ \\
\hline
$3$ & $10$ & $\hphantom{00}2.2$ & $1.9$ \\
$3$ & $100$ & $\hphantom{0}12.8$ & $4.0$ \\
$3$ & $1000$ & $122.5$ & $7.0$ \\
\hline
\end{tabular}

\smallskip

{\small
Results for a sample of $1000$ random inputs in $M_{d,d}(\Z_2)^n$}
\end{center}
\renewcommand{\arraystretch}{1}

\vspace{-0.3cm}

\caption{Average loss of precision in Algorithm 1}
\label{fig:mulmatrix}
\end{figure}
We observe that, in the first case, the number of lost digits seems to 
grow linearly with respect to the number of multiplications we are 
performing (that is $n$) whereas, in the second case, the growth seems 
to be only logarithmic. It would be nice to have a precise formulation 
(and proof) of this heuristic.

Note that multiplication of many random matrices plays a central
role in the theory of random walks on homogeneous spaces \cite{Benoist:2012}.
Better stability in computing such products helps estimate
Lyapunov exponents in that context.

\subsection{Determinant}
\label{ssec:det}

The computation of the differential of $\det : M_{n,n}(K) \to K$
is classical: at a point $M$ it is the linear map
\[
\detp(M) : dM \mapsto \tr(\com(M) \cdot dM),
\]
where $\com(M)$ stands for the comatrix of $M$, which is $\det(M) M^{-1}$ when $M$ is invertible. If $\rank(M) < n - 1$,
then $\detp(M)$ is not surjective and we cannot apply Proposition 
\ref{prop:precision}. Therefore, we suppose that $\rank(M) \ge n-1$ for the rest of this section.

As with matrix multiplication, we first determine the image of
the standard lattice $\mathcal L_0 = M_{n,n}(\O_K)$ under $\detp(M)$.

\begin{prop}
\label{prop:detmatrix}
Setting $v = \sigma_1(M) + \cdots + \sigma_{n-1}(M)$, we have
$\detp(M)(\mathcal L_0) = \pi^v \O_K$.
\end{prop}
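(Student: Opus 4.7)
The plan is to reduce to the diagonal case via the Smith decomposition and then read off the image directly.

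First I would write $M = U_M \cdot \Delta_M \cdot V_M$ with $U_M, V_M \in \GL_n(\OK)$ and perform the change of variable $dN = U_M^{-1} \cdot dM \cdot V_M^{-1}$. Since $U_M$ and $V_M$ are unimodular, the substitution $dM \mapsto dN$ is an $\OK$-linear automorphism of $\mathcal L_0 = M_{n,n}(\OK)$, so as $dM$ ranges over $\mathcal L_0$ so does $dN$. The multiplicativity of the determinant gives $\det(M + dM) = \det(U_M) \det(V_M) \cdot \det(\Delta_M + dN)$, and differentiating yields
\[
\detp(M)(dM) = \det(U_M) \det(V_M) \cdot \detp(\Delta_M)(dN).
\]
Since $\det(U_M)\det(V_M) \in \OK^\times$, it suffices to show $\detp(\Delta_M)(\mathcal L_0) = \pi^v \OK$.

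Next I would compute $\detp(\Delta_M)$ explicitly. Because $\Delta_M$ is diagonal, $\com(\Delta_M)$ is diagonal with $(i,i)$-entry $\prod_{j \neq i} \pi^{\sigma_j(M)}$. Therefore
\[
\detp(\Delta_M)(dN) = \tr\bigl(\com(\Delta_M) \cdot dN\bigr) = \sum_{i=1}^n \Bigl(\prod_{j \neq i} \pi^{\sigma_j(M)}\Bigr) \cdot dN_{i,i}.
\]
As $dN$ varies over $\mathcal L_0$, the diagonal entries $dN_{i,i}$ range independently over $\OK$, so the image is the $\OK$-submodule of $K$ generated by the scalars $\prod_{j \neq i} \pi^{\sigma_j(M)}$ for $1 \le i \le n$. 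This module is $\pi^w \OK$ where $w = \min_i \sum_{j \neq i} \sigma_j(M)$, and since the $\sigma_j(M)$ are nondecreasing in $j$, the minimum is attained at $i = n$, giving $w = \sigma_1(M) + \cdots + \sigma_{n-1}(M) = v$.

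The only mild subtlety, which I would handle separately, is the rank-deficient case $\rank(M) = n-1$, where $\sigma_n(M) = +\infty$ and the $(n,n)$-entry of $\Delta_M$ is $0$. In that situation the products $\prod_{j \neq i} \pi^{\sigma_j(M)}$ vanish for every $i < n$ and the only surviving term in $\detp(\Delta_M)(dN)$ is $\pi^v \cdot dN_{n,n}$, whose image as $dN_{n,n}$ varies over $\OK$ is still $\pi^v \OK$. So the same formula holds throughout the regime $\rank(M) \ge n-1$ in which $\detp(M)$ is surjective, and the main obstacle — making the argument uniform across the two rank cases — is dissolved by the observation that in both cases the extremal index $i=n$ contributes the term $\pi^v$.
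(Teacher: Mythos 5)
Your proof is correct and takes essentially the same route as the paper: both reduce to the diagonal matrix $\Delta_M$ by exploiting the unimodularity of the Smith factors, after which the image of $\mathcal L_0$ is visibly generated by the cofactor entries $\prod_{j\neq i}\pi^{\sigma_j(M)}$, whose minimal valuation is $v$. The paper phrases the reduction as invariance of the ideal of $(n-1)$-minors under unimodular multiplication, whereas you make the same reduction explicit via the change of variables $dN = U_M^{-1}\, dM\, V_M^{-1}$ and multiplicativity of $\det$; your separate check of the rank $n-1$ case (where $\sigma_n(M)=+\infty$) is careful but amounts to the same computation.
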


\begin{proof}
From the description of $\detp(M)$, we see that it is enough to prove 
that the smallest valuation of an entry of $\com(M)$ is $v$
or, equivalently, that the ideal of $\O_K$ generated by all 
minors of $M$ of size $(n-1)$ is $\pi^v
\O_K$. But this ideal remains unchanged when we multiply $M$ 
on the left or on the right by a unimodular matrix. Thus we may 
assume that $M = \Delta_M$, and the result becomes clear.
\end{proof}

In terms of precision, Proposition \ref{prop:detmatrix} implies that if 
$M$ is given at flat precision $O(\pi^N)$ with $N > v$, then $\det(M)$ 
is known at precision $O(\pi^{N+v})$.  Thus we are gaining $v$  
digits in absolute precision or, equivalently, losing $\sigma_n(M)$ digits
of relative precision.
Furthermore, one may compute $\det(M)$ with this optimal precision by finding an approximate
Smith decomposition with $\Delta_M$ known at precision $O(\pi^N)$ and
multiplying its diagonal entries. 

\subsection{Characteristic polynomials} \label{ssec:charpoly}

Write $\charpoly : M_{n,n}(K) \to K[X]$ for the characteristic 
polynomial, and $K_{<n}[X] \subset K[X]$ for the subspace consisting of 
polynomials of degree less than $n$.  Then the differential of 
$\charpoly$ at a point $M$ is
\[
\charp(M) : dM \mapsto \tr(\com(X - M) \cdot dM).
\]
The image is the $K$-span of the entries of $\com(X{-}M)$, which is 
clearly contained within $K_{<n}[X]$.  In fact, the image will equal 
$K_{<n}[X]$ as long as $M$ does not have two Jordan blocks with the same 
generalized eigenvalue. For now on, we make this assumption.

Recall that the \emph{Newton polygon} $\NP(f)$ of a polynomial $f(X) = 
\sum_{i} a_i X^i$ is the lower convex hull of the points $(i, 
\val(a_i))$ and the \emph{Newton polygon} $\NP(M)$ of a matrix $M$ is 
$\NP(\charpoly(M))$.  The \emph{Hodge polygon} $\HP(M)$ of $M$ is the lower 
convex hull of the points $(i, \sum_{j = 1}^{n-i} \sigma_j(M))$. For 
any matrix $M$, the polygon $\NP(M)$ lies above $\HP(M)$ 
\cite{kedlaya:padicDiffEq}*{Thm. 4.3.11}.

Such polygons arise naturally in tracking the precision of polynomials 
\cite{caruso-roe-vaccon:14a}*{\S 4.2}.  Any such polygon $P$ yields a 
lattice $\mathcal{L}_P$ in $K_{<n}[X]$ consisting of polynomials whose 
Newton polygons lie above $P$.  This lattice is generated by monomials 
$a_iX^i$, where $\val(a_i)$ is the ceiling of the height of $P$ at $i$.  
These polygons are used in a coordinate-wise precision tracking for polynomial
arithmetic. We now introduce another polygon, bounded 
between $\NP(M)$ and $\HP(M)$, that will provide an estimate on the precision of $\charpoly(M)$.

\begin{deftn}
The \emph{precision polygon} $\PP(M)$ of $M$ is the lower convex hull of 
the Newton polygon of the entries of $\com(X{-}M)$.
\end{deftn}

It is clear from the definition $\mathcal L_{\PP(M)} \subset 
\charp(M)(\mathcal{L}_0)$ where $\mathcal{L}_0$ is the standard lattice 
$M_{n,n}(\OK)$. More precisely, $\PP(M)$ is the smallest polygon $P$ for 
which the inclusion $\mathcal L_P \subset \charp(M)(\mathcal{L}_0)$ holds. By 
Proposition \ref{prop:precision}, the precision polygon determines the minimal 
precision losses possible when encoding polynomial precision using polygons.

It turns out that the precision polygon is related to the Hodge and Newton 
polygons. If a polygon $P$ has vertices $(x_i, y_i)$, we let $T_n(P)$ be 
the translated polygon with vertices $(x_i - n, y_i)$.

\begin{prop} \label{prop:polygons}
The precision polygon $\PP(M)$ lies between $T_1(\HP(M))$
and $\NP(M)$.

Moreover, $\PP(M)$ and $T_1(\HP(M))$ meet at $0$ and $n{-}1$.
\end{prop}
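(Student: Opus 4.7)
The plan is to decompose the statement into two pointwise inequalities on $[0,n-1]$ --- namely $T_1(\HP(M))\le \PP(M)\le \NP(M)$ --- together with equalities against $T_1(\HP(M))$ at the two endpoints $x=0$ and $x=n-1$. Throughout I will write $\com(XI-M)=\sum_i C_i X^i$ with $C_i\in M_{n,n}(\OK)$, so that $\PP(M)$ is, by definition, the lower convex hull of the points $(i,\val((C_i)_{j,k}))$.

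The endpoints are immediate. At $x=n-1$, the diagonal entries $\com(X-M)_{k,k}=\charpoly(M^{(k,k)})(X)$ are monic of degree $n-1$, placing $(n-1,0)$ on $\PP(M)$, and $T_1(\HP(M))(n-1)=\HP(M)(n)=0$. At $x=0$, the constant coefficients of the entries of $\com(X-M)$ are, up to sign, the entries of $\com(M)$, whose minimum valuation equals $\sigma_1(M)+\cdots+\sigma_{n-1}(M)$ by Proposition \ref{prop:detmatrix}; this common value is $T_1(\HP(M))(0)$ and is also $\PP(M)(0)$, since $x=0$ is the leftmost abscissa carrying data.

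For the lower bound $T_1(\HP(M))\le \PP(M)$, I would expand $\det((XI-M)^{(k,j)})$ by the Leibniz formula: the $X^i$-coefficient of $\com(X-M)_{j,k}$ is then a signed sum of $(n-1-i)\times(n-1-i)$ minors of $M$. The standard Smith normal form estimate that any $k\times k$ minor of $M$ has valuation at least $\sigma_1+\cdots+\sigma_k$ then shows that every defining data point $(i,v)$ of $\PP(M)$ satisfies $v\ge \sigma_1+\cdots+\sigma_{n-1-i}=T_1(\HP(M))(i)$. Jensen's inequality applied to the convex function $T_1(\HP(M))$ propagates this to every convex combination of data points, giving $\PP(M)\ge T_1(\HP(M))$ everywhere.

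For the upper bound $\PP(M)\le \NP(M)$, I would use the Cayley--Hamilton-type identity $\charpoly(M)(X)\cdot I=(XI-M)\com(XI-M)$. Extracting the $X^j$-coefficient yields $c_jI = C_{j-1}-MC_j$, where $\charpoly(M)(X)=\sum_j c_jX^j$. Reading off a diagonal entry, bounding each entry of $C_{j-1}$ and $C_j$ by the hull values $\PP(M)(j-1)$ and $\PP(M)(j)$, and using that $\PP(M)$ is non-increasing (its rightmost vertex is $(n-1,0)$ while all data is nonnegative, so its slopes are $\le 0$), one obtains $\val(c_j)\ge \PP(M)(j)$. Extending $\PP(M)$ to $x=n$ along its last (non-positive) slope preserves this bound at $j=n$ since $\val c_n=0$; because $\NP(M)$ is the largest convex function dominated at integers by the $\val(c_j)$, we conclude $\PP(M)\le\NP(M)$ on $[0,n-1]$. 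The main obstacle is precisely this last step: it requires the Cayley--Hamilton identity to transfer valuation data from the $n^2$ entries of $\com(X-M)$ to the $n+1$ coefficients of $\charpoly(M)$, combined with the characterization of $\NP(M)$ as the maximal convex function bounded above by the coefficient valuations.
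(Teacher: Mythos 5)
Your proof is correct and follows essentially the same route as the paper: the Hodge-side bound via the Smith-form estimate on minors (the paper phrases it through exterior powers), the endpoint identifications using the monic diagonal cofactors and Proposition \ref{prop:detmatrix}, and the Newton-side bound via the identity $(X-M)\cdot\com(X-M)=\charpoly(M)\cdot I$ together with the nonincreasing property of $\PP(M)$ and convexity. Your explicit extension of $\PP(M)$ to abscissa $n$ along its last slope is a slightly more careful treatment of the point $(n,0)$ than the paper's ``taking lower convex hulls,'' but the argument is the same in substance (both, like the paper, implicitly use integrality of $M$).
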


\begin{proof}
The coefficients of $\charpoly(M)$ can be expressed as traces of 
exterior powers: the coefficient of $X^i$ is $\tr(\Lambda^i(M))$,
which is $\tr(\Lambda^i(U_M) \Lambda^i(\Delta_M) \Lambda^i(V_M))$.  
Computing $\Lambda^i(\Delta_M)$, we get the first statement of
the Proposition. For $i = 1$, we further find that $\PP(M)$ vanishes
at the abscissa $n{-}1$. By definition so does $T_1(\HP(M))$. The fact 
that $\PP(M)$ and $T_1(\HP(M))$ meet at abscissa $0$ follows from
Proposition \ref{prop:detmatrix}.

It remains to prove the comparison with the Newton polygon.
Set $f = \charpoly(M)$, set $m_{i,j}$ as the $(i,j)$-th entry of $M$, 
$f_{i,j}$ as the $(i,j)$-th entry of $\com(X{-}M)$ and $\mu_{i,j} = 
\val(m_{i,j})$. We write $f[k]$ 
for valuation of the coefficient of $X^k$ in $f$, and set $f[-1] = 
+\infty$. The equation $(X{-}M) \cdot \com(X{-}M) = f \cdot I$ yields, 
for all $i$ and $k$,
\begin{align*}
f[k] &\ge \inf(f_{i,i}[k{-}1], \mu_{i,0} + f_{0,i}[k], \ldots, 
\mu_{i,n} + f_{n,i}[k]) \\
&\ge \inf(f_{i,i}[k{-}1], f_{j,i}[k]),
\end{align*}
with the infimum over $j$. 
Taking lower convex hulls and noting that $\PP(M)$ is nonincreasing,
which follows from the comparison with the Hodge polygon, we get the 
result.
\end{proof}

\begin{rem}
Experiments actually support the following stronger result: $\PP(M)$ is 
bounded above by $T_1(\NP(M))$.
\end{rem}

For many matrices, $\PP(M) = T_1(\HP(M))$.  For random matrices over 
$\Z_2$, the $2$-adic precision polygon is equal to the Hodge polygon in 
$99.5\%$ of cases in dimension $4$, down to $99.1\%$ in dimension $8$.  
Over $\Z_3$, the fraction rises to $99.98\%$, with no clear dependence on 
dimension.
Empirically, $\PP(M)$ seems most likely to differ from $T_1(\HP(M))$ at 
$1$, corresponding to the precision of the linear term of the 
characteristic polynomial.

Of course, the precision lattice $\mathcal{E} = \charp(M)(\mathcal L_0)$ 
may contain diffuse precision which is not encapsulated in $\PP(M)$.
Diffuse precision arises in $11\%$ of cases in 
dimension $3$, up to $15\%$ of cases in dimension $8$.  This 
percentage increases as $\val(\det(M))$ increases, reaching $34\%$ in 
dimension $9$ for matrices constrained to have determinant with $2$-adic 
valuation $12$.
Moreover, one can construct examples with arbitrarily large amounts of diffuse precision.
Suppose the $\sigma_i(M)$ are large.  
Proposition \ref{prop:polygons} implies that $\mathcal{E}$ is 
contained within $\O_{K, <n}[X]$ with index at least $\sum_{i=1}^{n-1} 
\sigma_i(M)$.  The precision lattice of $1 + M$ is obtained from 
$\mathcal{E}$ via the transformation $X \mapsto 1 + X$, but $\PP(1 + M)$ 
is now flat with height $0$.

For randomly chosen matrices, approximating $\mathcal{E}$ 
using the Hodge polygon loses only a small amount of precision. However,
if the $\sigma_i(M)$'s are large or if $M$ is a translate of
such a matrix, using lattice precision can be very useful.

\subsection{LU factorization}

In this section, we denote by $\Vert \cdot \Vert$ the subordinate matrix norm on $M_n(K)$
and, given a positive real number $C$, we let $B(C)$ be the closed ball 
in $M_n(K)$ centered at the origin of radius $C$. We consider
the following subsets of $M_n(K)$:

\noindent $\bullet$
$O_n$ is the open subset of matrices whose principal 
minors do not vanish (we recall that the latest condition implies the
existence and the uniqueness of a LU factorization);

\noindent $\bullet$
$U_n$ is the sub-vector space of upper-triangular 
matrices;

\noindent $\bullet$
$L_n^0$ (resp. $L_n^u$) is the sub-affine space of 
nilpotent (resp. unipotent) lower-triangular matrices.

\smallskip

\noindent
{\bf Calculus and precision.}
We choose to normalize the LU factorization by requiring that $L$ is 
unipotent and denote by $D : O_n \to L_n^u \times U_n$ the function 
modeling this decomposition. The computation of the differential of $D$ 
has already been done in \cite[Appendix B]{caruso-roe-vaccon:14a}. For
$M \in \O_n$ with $D(M) = (L,U)$, the linear mapping $D'(M)$ is given 
by:
$$
dM \mapsto (L \cdot \low(dX), \up(dX) \cdot U)
\text{ with } dX = L^{-1} \cdot dM \cdot U^{-1}
$$
where $\low$ (resp. $\up$) denotes the canonical projection of $M_n(K)$ 
onto $L_n^0$ (resp. $U_n$). 
It is easily checked that $D'(M)$ is bijective with inverse given by
$(A,B) \mapsto AU+LB$.

We now want to apply Proposition \ref{prop:boundLambdaf2} in order to 
derive a concrete result on precision. \emph{We then assume that $K$ has 
characteristic $0$.} We pick $M_0 \in
O_n$ and write $D(M_0) = (L_0,U_0)$. We consider the translated
function $f$ taking $M$ to $D(M_0+M)-D(M_0)$. We then have $f(0) = 0$ 
and $f'(M) = D'(M_0+M)$. Using the explicit description of the inverse
of $D'(M_0+M)$, we find $B(1) \subset f'(0) \cdot B(C)$
for $C = \max(\Vert U_0 \Vert, \Vert L_0 \Vert)$. Moreover, $f$ satisfies 
the differential equation $f' = g \circ f$ where $g$ is defined by:
\begin{equation}
\label{eq:gAB}
\begin{array}{l}
g(A,B)(X) = \big( (L_0+A) \cdot \low(Y), \up(Y) \cdot (U_0+B) \big)
\smallskip \\
\hspace{1.8cm}\text{with } Y = (L_0+A)^{-1} \cdot X \cdot (U_0+B)^{-1}.
\end{array}
\end{equation}
Let $\kappa(S) = \Vert S \Vert \cdot \Vert S^{-1} \Vert$ denote the 
condition number of a matrix $S$. Remarking that $\Vert S + T 
\Vert = \Vert S \Vert$ if $\Vert T \Vert < \Vert S \Vert$ and $\Vert (S 
+ T)^{-1} \Vert = \Vert S^{-1} \Vert$ if $\Vert T \Vert < \Vert S^{-1} 
\Vert^{-1}$, we deduce from \eqref{eq:gAB} that
$\Vert g(A,B) \Vert \leq \max 
\big( \kappa(L_0) \Vert U_0^{-1} \Vert, 
\kappa(U_0) \Vert L_0^{-1} \Vert \big)$
provided that $\Vert A \Vert < \Vert L_0^{-1} \Vert^{-1}$ and 
$\Vert B \Vert < \Vert U_0^{-1} \Vert^{-1}$.
Combining this with Proposition \ref{prop:boundLambdaf2} and 
\cite[Proposition.~3.12]{caruso-roe-vaccon:14a}, we finally find that 
Eq.~\eqref{eq:firstorder} holds as soon as 
$$\begin{array}{l}
\displaystyle \frac \rho r > \Vert p \Vert^{-\frac{2p}{p-1}} \cdot
\max ( \Vert L_0 \Vert, \Vert U_0 \Vert) \cdot
\max ( \Vert L_0 ^{-1} \Vert, \Vert U_0 ^{-1} \Vert) \cdot{} \\
\hspace{3.5cm} \max \big( \kappa(L_0) \Vert U_0^{-1} \Vert, 
  \kappa(U_0) \Vert L_0^{-1} \Vert \big)^2.
\end{array}$$

\noindent
{\bf Numerical experiments.}
Let $B_n=(E_{i,j})_{1 \leq i,j\leq n}$ be the canonical basis of 
$M_n(K)$. It can be naturally seen as a basis of $L_n^0 \times U_n$ as
well. For a given $M \in O_n$, let us abuse notation and write $D'(M)$
for the matrix of this linear mapping in the above basis. We remark that
$D'(M)$ is lower-triangular in this basis. Projecting $D'(M)$ onto each
coordinate, we find the best coordinate-wise loss of precision we can 
hope for the computation of $D$ is given by $\sum_u \left( \max_v \left( 
\val(D'(M)_{u,v}) \right) \right)$. This number should be compared to
$\val(\det(D'(M)))$, which is precision lost in the lattice method. 
The number of diffused digits of precision of $D'(M)(M_{n,n}(\OK))$ is 
then the difference between these two numbers.
Figure \ref{fig:LU} summarizes the mean and standard deviation of those 
losses for a sample of 2000 random matrices in $M_{d,d}(\mathbb{Z}_2)$.
\begin{figure}
\begin{center}
\renewcommand{\arraystretch}{1.2}
\begin{tabular}{|c|c|c|c|c|}
\hline 
& \multicolumn{4}{|c|}{Loss of precision in LU decomposition} \\
\cline{2-5}
\raisebox{0.2em}{matrix}
& \multicolumn{2}{|c|}{coord-wise method} & \multicolumn{2}{|c|}{lattice method}  \\  \cline{2-5}
\smash{\raisebox{0.6em}{size}} 
& \hspace{0.5em}mean\hspace{0.5em} & deviation 
& \hspace{0.5em}mean\hspace{0.5em} & deviation \\ \hline
$2$ & $\phantom{2}3.0$& $5$ & $1.5$ & $1.4$ \\
$3$& $\phantom{2}9.4$& $11$ & $2.3$ & $2.3$\\
$4$ & $20\phantom{.4}$& $20$ & $3.8$ & $3.1$\\ \hline
\end{tabular} 
\smallskip

{\small
Results for a sample of $2000$ instances}
\end{center}
\renewcommand{\arraystretch}{1}

\vspace{-0.3cm}

\caption{Loss of precision for LU factorization}
\label{fig:LU}

\end{figure}
%

%

\section{Vector spaces}
\label{sec:vectorspaces}

Vector spaces are generally represented as subspaces of $K^n$ for some 
$n$ and hence naturally appear as points on Grassmannians. Therefore, 
one can use the framework of \cite[Appendix A]{caruso-roe-vaccon:14a} to 
study $p$-adic precision in this context.

\subsection{Geometry of Grassmannians}
\label{ssec:grassgeo}

Given $E$, a finite dimensional vector space over $K$, and $d$, an 
integer in the range $[0, \dim E]$, we write $\Grass(E,d)$ for the
Grassmannian of $d$-dimensional subspaces of $E$. It is well-known
that $\Grass(E,d)$ has the natural structure of a $K$-manifold. The aim of
this subsection is to recall standard facts about its geometry. In what follows,
we set $n = \dim E$ and equip $E$ with a distinguished basis
$(e_1, \ldots, e_n)$.

\smallskip

\noindent
{\bf Description and tangent space.}
Let $V$ denote a fixed subspace of $E$ of dimension $d$. The 
Grassmannian $\Grass(E,d)$ can be viewed as the quotient of the set 
of linear embeddings $f: V \hookrightarrow E$ modulo the action (by 
precomposition) of $\GL(V)$: the mapping $f$ represents its image 
$f(V)$. It follows from this description that the tangent space of 
$\Grass(E,d)$ is canonically isomorphic to $\Hom(V, E) / \End(V)$,
which is $\Hom(V, E/V)$.

\smallskip

\noindent
{\bf Charts.}
Let $V$ and $V^\c$ be two complementary subspaces of $E$ 
(\emph{i.e.} $V \oplus V^\c = E$). We assume that $V$ has 
dimension $d$ and denote by $\pi$ the projection $E \to V$ 
corresponding to the above decomposition. We introduce the set 
$\mathcal U_{V,V^\c}$ of all embeddings $f : V \hookrightarrow E$ 
such that $\pi \circ f = \id_V$. Clearly, it is an affine space over
$\Hom(V,V^\c)$. 
Furthermore, we can embed it into $\Grass(E,d)$ by taking $f$ as
above to its image. This way, $\mathcal U_{V,V^\c}$ appears as
an open subset of $\Grass(E,d)$ consisting exactly of those subspaces 
$W$ such that $W \cap V^\c = 0$. As a consequence, the tangent space 
at each such $W$ becomes isomorphic to $\Hom(V,V^\c)$. The
identification $\Hom(V,V^\c) \to \Hom(W, E/W)$ is given by
$du \mapsto (du \circ f^{-1}) \text{ mod } W$ where $f : V 
\stackrel{\sim}{\to} W$ is the linear mapping defining $W$.

When the pair $(V, V^\c)$ varies, the open subsets $\mathcal 
U_{V,V^\c}$ cover the whole Grassmannian and define an atlas.
When implementing vector spaces on a computer, we usually restrict 
ourselves to the subatlas consisting of all charts of the form $(V_I, 
V_{I^\c})$ where $I$ runs over the family of subsets of $\{1, 
\ldots, n\}$ of cardinality $d$ and $V_I$ is the subspace spanned by 
the $e_i$'s with $i \in I$. A subspace $W \in E$ then belongs to at 
least one $\mathcal U_{V_I, V_{I^{\text{c}}}}$ and, given a family of
generators of $W$, we can determine such an $I$ together with the
corresponding embedding $f : V_I \hookrightarrow E$ by row reducing 
the matrix of generators of $W$.

\smallskip

\noindent
{\bf A variant.}
Alternatively, one can describe $\Grass(E,d)$ as the set of linear 
surjective morphisms $f : E \to E/V$ modulo the action (by 
postcomposition) of $\GL(E/V)$. This identification presents the 
tangent space at a given point $V$ as the quotient $\Hom(E, E/V) / 
\End(E/V) \simeq \Hom(V, E/V)$.
Given a decomposition $E = V \oplus V^\c$ as above, we let $\mathcal 
U^\star_{V, V^\c}$ denote the set of surjective linear maps $f : E 
\to V^\c$ whose restriction to $V^\c$ is the identity. It is an 
affine space over $\Hom(V, V^\c)$ which can be identified with an open
subset of $\Grass(E,d)$ \emph{via} the map $f \mapsto \ker f$.

It is easily seen that $\mathcal U_{V, V^\c}$ and $\mathcal 
U^\star_{V, V^\c}$ define the same open subset in $\Grass(E,d)$. 
Indeed, given $f \in \mathcal U_{V, V^\c}$, one can write $f = \id_V 
+ h$ with $h \in \Hom(V, V^\c)$ and define the morphism $g = \id_E -
h \circ \pi \in \mathcal U^\star_{V, V^\c}$. The association $f \mapsto
g$ then defines a bijection $\mathcal U_{V, V^\c} \to \mathcal
U^\star_{V, V^\c}$ which commutes with the embeddings into the
Grassmannian.

\smallskip

\noindent
{\bf Duality.}
If $E$ is a finite dimensional vector space over $K$, we use the notation 
$E^\star$ for its dual (\emph{i.e.} $E^\star = \Hom(E,K)$). If we are also given
a subspace $V \subset E$, we denote by $V^\perp$ the subspace 
of $E^\star$ consisting of linear maps that vanish on $V$. We recall
that the dual of $V^\perp$ (resp. $E^\star/V^\perp$) is canonically
isomorphic to $E/V$ (resp. $V$).
For all $d$, the association $V \mapsto V^\perp$ defines a continuous 
morphism $\psi_E : \Grass(E,d) \to \Grass(E^\star, n-d)$. The action of 
$\psi_E$ on tangent spaces is easily described. Indeed, the differential 
of $\psi_E$ at $V$ is nothing but the canonical identification between
$\Hom(V, E/V)$ and $\Hom(V^\perp, E^\star/V^\perp)$ induced by 
transposition. Furthermore, we observe that $\psi_E$ respects
the charts we have defined above, in the sense that it maps bijectively 
$\mathcal U_{V, V^\c}$ to $\mathcal U^\star_{V^\perp, (V^\c)^\perp}
\simeq \mathcal U_{V^\perp, (V^\c)^\perp}$.

\subsection{Differential computations}
\label{ssec:grassdiff}

In this subsection, we compute the differential of various operations on 
vector spaces. For brevity, we skip the estimation of the corresponding 
growing functions (but this can be done using Proposition 
\ref{prop:boundLambdaf2} as before if $\text{char}(K) = 0$).

\smallskip

\noindent
{\bf Direct images.}
Let $E$ and $F$ be two finite dimensional $K$-vector spaces of dimension 
$n$ and $m$, respectively. Let $d$ be an integer in $[0,n]$. We are
interested in the direct image function $\DI$ defined 
on $\mathcal M = \Hom(E,F) \times \Grass(E,d)$ that takes the 
pair $(f,V)$ to $f(V)$. Since the dimension of $f(V)$ may vary,
the map $\DI$ does not take its values in a well-defined 
Grassmannian. We therefore stratify $\mathcal M$ as
follows: for each integer $r \in [0,d]$, let $\mathcal M_r \subset \mathcal M$
be the subset of pairs $(f,V)$ for which $f(V)$ has dimension $r$.
The $\mathcal M_r$'s are locally closed in $\mathcal M$ and are therefore
submanifolds. Moreover, $\DI$ induces differentiable 
functions $\DI_r : \mathcal M_r \to \Grass(F,r)$.

We would like to differentiate $\DI_r$ around some point $(f,V) \in 
\mathcal M_r$. To do so, we use the first description of the 
Grassmannians we gave above: we see points in $\Grass(E,d)$ 
(resp. $\Grass(F,d)$) as embeddings $V \hookrightarrow E$ (resp. $W 
\hookrightarrow F$) modulo the action of $\GL(V)$ (resp. $\GL(W)$).
The point $V \in \Grass(E,d)$ is then represented by the canonical 
inclusion $v : V \to E$ whereas a representative $w$ of $W$ satisfies
$w \circ \varphi = f \circ v$
where $\varphi : V \to W$ is the linear mapping induced by $f$. The 
previous relation still holds if $(f,v)$ is replaced by a pair $(f', 
v') \in \mathcal M_r$ sufficiently close to $(f,v)$.
Differentiating it and passing to the quotient we find, first, that the 
tangent space of $\mathcal M_r$ at $(f,v)$ consists of pairs $(df, dv) 
\in \Hom(E,F) \times \Hom(V,E/V)$ such that
$$d\tilde w = \big((df \circ v + f \circ dv) \text{ mod } W\big)
\in \Hom(V, F/W)$$
factors through $\varphi$ (\emph{i.e.} vanishes on $\ker \varphi = V 
\cap \ker f$) and, second, that the differential of $\DI_r$ at $(f,V)$ 
is the linear mapping sending $(df,dv)$ as above to the unique element 
$dw \in \Hom(W,F/W)$ such that $dw \circ \varphi = d \tilde w$.

\smallskip

\noindent
{\bf Inverse images.}
We now consider the inverse image mapping $\II$ sending a 
pair $(f,W) \in \mathcal W = \Hom(E,F) \times \Grass(F,d)$ to 
$f^{-1}(W)$. As before, this map does not take values in a single 
Grassmannian, so we need to stratify $\mathcal W$ in order to get 
differentiable functions. For each integer $s \in [0,n]$, we introduce 
the submanifold $\mathcal W_s$ of $\mathcal W$ consisting of those 
pairs $(f,W)$ such that $\dim f^{-1}(W) = s$. For all $s$, $\II$ 
induces a continuous function
$\II_s : \mathcal W_s \to \Grass(E,s)$.
Pick $(f,W) \in \mathcal W_s$. Set $V = f^{-1}(W)$ and denote by $w : F 
\to F/W$ the canonical projection.
Similarly to what we have done for direct images, one can prove that
the tangent space of $\mathcal W_s$ at some point $(f,W) \in \mathcal
W_s$ is the subspace of $\Hom(E,F) \times \Hom(W,F/W)$ consisting of 
pairs $(df,dw)$ such that
$d \tilde v = (w \circ df + dw \circ f)_{|W}$
factors through the linear mapping $\varphi : E/V \to F/W$ induced by 
$f$. Furthermore $\II_s$ is differentiable at $(f,W)$ and its 
differential is the linear mapping that takes $(df,dw)$ as above to the 
unique element $dv \in \Hom(V,E/V)$ satisfying $\varphi \circ dv =
d\tilde v$.

Direct images and inverse images are related by duality
as follows: if $f : E \to F$ is any linear map and $W$ is a subspace
of $F$, then $f^\star(W^\perp) = f^{-1}(W)^\perp$. We 
can thus deduce the differentials of $\DI_s$ from those of $\II_s$ and 
\emph{vice versa}.


\smallskip

\noindent
{\bf Sums and intersections.}
Let $d_1$ and $d_2$ be two nonnegative integers. We consider the 
function $\Sigma$ defined on the manifold $\mathcal C = \Grass(E,d_1) 
\times \Grass(E,d_2)$ by $\Sigma(V_1, V_2) = V_1 + V_2$. As before, in 
order to study $\Sigma$, we stratify $\mathcal C$ according to the 
dimension of the sum: for each integer $d \in [0, d_1+d_2]$, we define 
$\mathcal C_d$ as the submanifold of $\mathcal C$ consisting of those 
pairs $(V_1, V_2)$ such that $\dim(V_1 + V_2) = d$. We get a 
well-defined mapping $\mathcal C_d \to \Grass(E,d)$ whose differential
can be computed as before. The
tangent space of $\mathcal C_d$ at a given point $(V_1, V_2)$ consists 
of pairs $(dv_1, dv_2) \in \Hom(V_1, E/V_1)
\times \Hom(V_2, E/V_2)$ such that $dv_1 \equiv dv_2 \pmod{V_1 + V_2}$ 
on the intersection $V_1 \cap V_2$, and the differential of $\Sigma$
at $(V_1, V_2)$ maps $(dv_1, dv_2)$ to $dv \in \Hom(V, E/V)$ (with $V
= V_1 + V_2$) defined by $dv(v_1 + v_2) = dv_1(v_1) + dv_2(v_2)$ ($v_1
\in V_1$, $v_2 \in V_2$).

Using duality, we derive a similar result for the mapping 
$(V_1, V_2) \mapsto V_1 \cap V_2$ (left to the reader).

\subsection{Implementation and experiments}
\label{grassimpl}

\smallskip

\noindent
{\bf Standard representation of vector spaces.}  
One commonly represents subspaces of $K^n$  
using the charts $\mathcal U_{V_I, V_{I^\c}}$ (where $I$ is a subset 
of $\{1, \ldots, n\}$) introduced above. More concretely,
a subspace $V \subset K^n$ is represented as the span of the rows
of a matrix $G_V$ having the following extra property: there 
exists some $I \subset \{1, \ldots, n\}$ such that the submatrix of 
$G_V$ obtained by keeping only columns with index in $I$ is the identity 
matrix. We recall that such a representation always exists and, when 
the set of indices $I$ is fixed, at most one $G_V$ satisfies the above condition.
Given a family of generators of $V$, one can compute $G_V$ and $I$ as 
above by performing standard row reduction.  Choosing the first non-vanishing
pivot at every stage provides a canonical choice for $I$, but in the context of
inexact base fields, choosing the pivot with the maximal norm yields
a more stable algorithm.

\smallskip

\noindent
{\bf The dual representation.}
Of course, one may alternatively use the charts $\mathcal U^\star_{V_I, 
V_{I^\c}}$. Concretely, this means that we represent $V$ as the left 
kernel of a matrix $H_V$ having the following extra property: there 
exists some $I \subset \{1, \ldots, n\}$ such that the submatrix of 
$H_V$ obtained by \emph{deleting} rows with index in $I$ is the identity 
matrix. As above, we can then compute $I$ and $H_V$ by performing 
column reduction.

Note that switching representations is cheap and stable.
If $I = \{1, \ldots, d\}$ with $d = \dim V$ and $I_d$ 
is the identity matrix of size $d$, the matrix $G_V$ has the form
$(\begin{matrix} I_d & G'_V \end{matrix})$.
One can represent $V$ with the same $I$ and the matrix
$H_V = \Big(\begin{matrix} -G'_V \\ I_{n-d} \end{matrix}\Big)$.
A similar formula exists for general $I$.

\medskip

\noindent
{\bf Operations on vector spaces.}
The first representation we gave is well suited for the computation
of direct images and sums. For instance, to compute $f(V)$
we apply $f$ to each row of $G_V$, obtaining a
family of generators of $f(V)$, and then row reduce. Dually, the
second representation works well for computing inverse images, including 
kernels, and intersections. Since translating between the two dual
representations is straightforward, we get algorithms for solving both
problems using either representation.


\smallskip

\noindent
{\bf Some experiments.}
Let us consider the example computation given in the following
algorithm.

\noindent\hrulefill

\noindent {\bf Algorithm 2:} {\tt example\_vector\_space}

\noindent{\bf Input:} two integers $n$ and $N$

\smallskip

\noindent 1.\ Set $L_0 = \langle (1 + O(2^N),  O(2^N), O(2^N)) \rangle
\subset \Q_2^3$

\noindent 2.\ {\bf for} $i=0,\dots,n-1$

\noindent 3.\ \hspace{0.3cm}pick randomly $\alpha, \beta, \gamma, \delta
\in M_{3,3}(\Z_2)$ with high precision

\noindent 4.\ \hspace{0.3cm}{\bf compute} $L_{i+1} = 
\big(\alpha(L_i) + \beta(L_i)\big) \cap \big(\gamma(L_i) + \delta(L_i)\big)$

\noindent 5.\ {\bf return} $L_n$

\vspace{-1ex}\noindent\hrulefill

\medskip

\noindent
The expression \emph{high precision} on line 3 means that the precision on 
$\alpha$, $\beta$, $\gamma$ and $\delta$ is set in such a way that it 
does not affect the resulting precision on $L_{i+1}$.
\begin{figure}
\begin{center}
\renewcommand{\arraystretch}{1.2}
\begin{tabular}{|c|c|c|c|}
\hline
\multirow{3}{*}{\hspace{0.2cm}$n$\hspace{0.2cm}} & 
\multicolumn{3}{|c|}{Average loss of precision} \\
\cline{2-4}
& \raisebox{-0.05cm}{Coord-wise} & 
\multicolumn{2}{|c|}{Lattice method} \\
\cline{3-4}
& \raisebox{0.04cm}{method} & 
\hspace{0.2cm}Projected\hspace{0.2cm} & 
\hspace{0.2cm}Diffused\hspace{0.2cm} \\
\hline 
$10$ & $\hphantom{0}7.3$ & $\hphantom{0}2.7$ & $\hphantom{0}{-}2.4 \times 2$ \\
$20$ & $14.8$ & $\hphantom{0}5.5$ & $\hphantom{0}{-}4.7 \times 2$ \\
$50$ & $38.6$ & $13.1$ & $-12.0 \times 2$ \\
$100$ & $78.1$ & $26.5$ & $-23.5 \times 2$ \\
\hline
\end{tabular}

\smallskip

{\small
Results for a sample of $1000$ executions (with $N \gg n$)}
\end{center}
\renewcommand{\arraystretch}{1}

\vspace{-0.4cm}

\caption{Average loss of precision in Algorithm 2}
\label{fig:vectorspace}

\vspace{-0.2cm}

\end{figure}
Figure \ref{fig:vectorspace} shows the losses of precision when
executing Algorithm 2 with various inputs $n$ (the input $N$ is
always chosen sufficiently large so that it does not affect the
behavior of precision). The \emph{Coord-wise} column corresponds to
the standard way of tracking precision. On the other hand, in the two 
last columns, the precision is tracked using lattices. The \emph{Diffused}
column gives the amount of diffused precision, factored to be comparible
to the Coord-wise column. 
The fact that only negative numbers appear in this column means that we 
are actually always gaining precision with this model! Finally, the 
\emph{Projected} column gives the precision loss after projecting the lattice
precision onto coordinates.

\appendix

\section{Proof of Proposition \ref{prop:boundLambdaf}}
\label{app:proof}

We prove Proposition \ref{prop:boundLambdaf} in the slightly more
general context of $K$-Banach spaces.

\subsection{Composite of locally analytic functions}

Let $U$, $V$ and $W$ be three open subsets in $K$-Banach spaces 
$E$, $F$ and $G$, respectively. We assume that $0 \in U$, $0 \in V$. Let 
$f : U \to V$ and $g : V \to W$ be two locally analytic functions around 
$0$ with $f(0) = 0$. The composition $h = g \circ f$ is then locally 
analytic around $0$ as well. Let $f = \sum_{n \geq 0} f_n$
$g = \sum_{n \geq 0} g_n$ and $h = \sum_{n \geq 0} h_n$ be the
analytic expansions of $f$, $g$ and $h$.
Here $f_n$, $g_n$ and $h_n$ are the restrictions to the diagonal of 
some symmetric $n$-linear forms $F_n$, $G_n$ and $H_n$, respectively. The 
aim of this subsection is to prove the following intermediate result.

\begin{prop}
\label{prop:boundhr}
With the above notation, we have
$$\Vert h_r \Vert \leq \sup_{m, (n_i)}
  \Vert g_m \Vert \cdot \Vert f_{n_1} \Vert \cdots \Vert f_{n_m} \Vert$$
for all nonnegative integers $r$, where the supremum is taken over all pairs $(m, (n_i))$ where $m$
is a nonnegative integer and $(n_i)_{1 \leq i \leq m}$ is a sequence of
length $m$ of nonnegative integers such that $n_1 + \ldots + n_m = r$.
\end{prop}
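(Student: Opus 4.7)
The plan is to expand the composition $h = g \circ f$ into its degree-$r$ piece and then bound each contribution using the ultrametric triangle inequality together with a Gauss-norm estimate. I would start from the analytic expansion of $g$ and use the multilinearity of the symmetric $m$-linear form $G_m$ underlying $g_m$. Since $f(0) = 0$ forces $f_0 = 0$, writing $f(x) = \sum_{n \geq 1} f_n(x)$ and expanding $g_m(f(x)) = G_m(f(x), \ldots, f(x))$ via multilinearity yields
\begin{equation*}
g_m(f(x)) = \sum_{(n_1,\ldots,n_m)} G_m(f_{n_1}(x), \ldots, f_{n_m}(x)),
\end{equation*}
the sum ranging over ordered $m$-tuples of positive integers. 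Grouping by total degree $r = n_1 + \cdots + n_m$ produces an explicit formula for $h_r(x)$ as a sum of terms $G_m(f_{n_1}(x), \ldots, f_{n_m}(x))$ with $\sum n_i = r$.

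The ultrametric triangle inequality then reduces the whole proposition to the single-term estimate $\|G_m(f_{n_1}(x), \ldots, f_{n_m}(x))\| \leq \|g_m\| \cdot \|f_{n_1}\| \cdots \|f_{n_m}\|$, read as a bound on Gauss norms over the unit ball in $x$.

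The main obstacle is precisely this single-term bound. A naive argument through a multilinear-form norm $\|G_m\|_{\mathrm{multi}}$ is doomed, because polarization introduces factorial factors and in general $\|G_m\|_{\mathrm{multi}}$ strictly exceeds $\|g_m\|$ in the ultrametric setting (for instance, the symmetric bilinear form associated to $g_2(y) = y_1 y_2$ has off-diagonal entries $\tfrac{1}{2}$, not a $2$-adic unit). Instead, I would work directly in coordinates: choose orthonormal bases on the unit balls of $E$, $F$, $G$, write $g_m(y) = \sum_{|\alpha|=m} c_\alpha y^\alpha$ with $\|c_\alpha\| \leq \|g_m\|$, and expand each coordinate $f_n(x)_i$ as a polynomial in $x$ with Gauss norm at most $\|f_n\|$. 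Substituting and extracting the degree-$r$ piece writes $h_r(x)$ as a sum of scalar-weighted products $c_\alpha \cdot \prod_{k=1}^{m} f_{n_k}(x)_{i_k}$, where the $(n_k)$ are ordered with $\sum n_k = r$ and the $(i_k)$ index coordinates of $F$. Multiplicativity of the Gauss norm on $K[x]$ then bounds each such product by $\|g_m\| \prod_k \|f_{n_k}\|$, and a final application of the ultrametric triangle inequality across all contributions yields the supremum bound in the statement.
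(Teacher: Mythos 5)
Your proposal is correct in substance but follows a genuinely different route from the paper. The paper stays coordinate-free: it writes the degree-$r$ part of $g\circ f$ as a sum of terms $\binom{m}{k_1\cdots k_\ell}G_m(f_{n_1},\ldots,f_{n_\ell})$ (grouping repeated arguments, with the multinomial coefficient kept attached) and proves a dedicated polarization lemma --- by induction on the number of distinct arguments, perturbing $x_\ell \mapsto x_\ell+\lambda x_{\ell+1}$ and extracting coefficients of the resulting polynomial in $\lambda$ --- showing that the multinomial coefficient exactly compensates the denominators, so that each grouped term is bounded by $\Vert g_m\Vert\cdot\prod\Vert f_{n_i}\Vert$. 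You instead expand everything in orthonormal coordinates and invoke (sub)multiplicativity of Gauss norms, which eliminates the polarization lemma and the multinomial bookkeeping altogether; you also correctly identify the pitfall (the symmetric multilinear norm of $G_m$ can strictly exceed $\Vert g_m\Vert$) that makes the naive argument fail and that the paper's lemma is designed to circumvent. What the paper's argument buys is basis-independence, so it applies verbatim to arbitrary $K$-Banach spaces; what yours buys is a more elementary and transparent proof, at the cost of assuming the unit balls admit orthonormal bases (automatic for the lattice norms used in the body of the paper, but requiring the norms to take values in $\Vert K^\times\Vert$, or an $\varepsilon$-orthogonal approximation, in the general Banach setting).

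Two caveats. First, your intermediate sentence reducing the proposition to the per-ordered-tuple bound $\Vert G_m(f_{n_1}(x),\ldots,f_{n_m}(x))\Vert\le\Vert g_m\Vert\prod\Vert f_{n_i}\Vert$ is not a valid reduction: that bound is false whenever $p$ divides the relevant multinomial denominators (only the sum over tuples with the same multiset, i.e.\ the multinomial-weighted term, is controlled). Your coordinate argument does not actually use it --- regrouping by monomials $c_\alpha\prod_k f_{n_k,i_k}$ sidesteps the issue --- but the sentence should be removed or rephrased. Second, your steps $\Vert c_\alpha\Vert\le\Vert g_m\Vert$ and ``coordinates of $f_n$ have Gauss norm at most $\Vert f_n\Vert$'' implicitly read $\Vert g_m\Vert$, $\Vert f_n\Vert$ as Gauss (spectral) norms, i.e.\ suprema over the unit ball after passing to an extension with infinite residue field. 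If one instead takes the supremum over $K$-rational points of the unit ball, these inequalities fail over small residue fields (e.g.\ $y_1y_2^2-y_1^2y_2$ over $\Q_2$ has sup norm $1/2$ but unit coefficients), and indeed the statement itself requires the spectral reading; so you should state this convention explicitly where you fix coordinates.
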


A computation gives the following expansion for $g \circ f$:
\begin{equation}
\label{eq:expansiongf}
\sum \binom m {\!k_1 \,\, \cdots \,\, k_\ell\!} \:
G_m(f_{n_1}, \ldots, f_{n_1}, \ldots, f_{n_\ell}, \ldots, f_{n_\ell})
\end{equation}
where $\binom m {\!k_1 \,\, \cdots \,\, k_\ell\!}$ denotes the
multinomial coefficient and the sum runs over:

\noindent
(a) all finite sequences $(k_i)$ of positive integers whose length 
(resp. sum) is denoted by $\ell$ (resp. $m$), and

\noindent
(b) all finite sequences $(n_i)$ of positive integers of length
$\ell$.

\noindent
Moreover, in the argument of $G_m$, the variable $f_{n_i}$ 
is repeated $k_i$ times.

The degree of $G_m(f_{n_1}, \ldots, f_{n_1}, \ldots, f_{n_\ell},
\ldots, f_{n_\ell})$ is $r = k_1 n_1 + \ldots + k_\ell n_\ell$ and 
then contributes to $h_r$. As a consequence $h_r$ is equal to 
\eqref{eq:expansiongf} where the sum is restricted to sequences
$(k_i)$, $(n_i)$ such that $k_1 n_1 + \ldots + k_\ell n_\ell = r$.
Proposition \ref{prop:boundhr} now follows from the next lemma.

\begin{lem}
Let $E$ be a $K$-vector space. Let $\varphi : E^m \to 
K$ be a symmetric $m$-linear form and $\psi: E \to K$ defined by 
$\psi(x) = \varphi(x, x, \ldots, x)$.
Given positive integers $k_1, \ldots, k_\ell$ whose sum is $m$ and 
$x_1, \ldots, x_\ell \in E$, we have
$$\begin{array}{l}
\Big\Vert \binom m {k_1 \,\, k_2 \,\, \cdots \,\, k_\ell} \cdot
\varphi(x_1, \ldots, x_1, \ldots, x_\ell, \ldots,
x_\ell) \Big\Vert  \\
\hspace{4.3cm} \leq \Vert \psi \Vert \cdot \Vert x_1 \Vert^{k_1} \cdots
 \Vert x_\ell \Vert^{k_\ell}
\end{array}$$
where, in the LHS, the variable $x_i$ is repeated $k_i$ times.
\end{lem}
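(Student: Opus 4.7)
The plan is to recognize the left-hand side as a single coefficient of a certain polynomial and to bound it via the non-Archimedean Gauss norm principle. Consider
$$Q(t_1, \ldots, t_\ell) := \psi(t_1 x_1 + \cdots + t_\ell x_\ell) \in K[t_1, \ldots, t_\ell].$$
Expanding using the multilinearity and symmetry of $\varphi$ yields
$$Q(t_1, \ldots, t_\ell) = \sum_{k_1 + \cdots + k_\ell = m} \binom{m}{k_1 \,\, \cdots \,\, k_\ell} \, t_1^{k_1} \cdots t_\ell^{k_\ell} \, \varphi(x_1, \ldots, x_1, \ldots, x_\ell, \ldots, x_\ell),$$
so the quantity to bound is exactly $|c_{(k_i)}|$ where $c_{(k_i)}$ denotes the coefficient of $t_1^{k_1} \cdots t_\ell^{k_\ell}$ in $Q$.

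Next I would bound $Q$ uniformly. The definition of $\Vert \psi \Vert$ gives $|\psi(y)| \leq \Vert \psi \Vert \cdot \Vert y \Vert^m$ for every $y \in E$, and the ultrametric triangle inequality yields $\Vert \sum t_i x_i \Vert \leq \max_i(|t_i| \cdot \Vert x_i \Vert)$. Combining,
$$|Q(t_1, \ldots, t_\ell)| \leq \Vert \psi \Vert \cdot \max_i \bigl(|t_i| \cdot \Vert x_i \Vert\bigr)^m$$
for every $(t_i) \in K^\ell$. Rescaling the variables, set $\tilde Q(s_1, \ldots, s_\ell) := Q(\lambda_1^{-1} s_1, \ldots, \lambda_\ell^{-1} s_\ell)$ for scalars $\lambda_i$ with $|\lambda_i| = \Vert x_i \Vert$; the previous bound becomes $|\tilde Q(s_1, \ldots, s_\ell)| \leq \Vert \psi \Vert$ whenever $|s_i| \leq 1$, and the coefficient of $s_1^{k_1} \cdots s_\ell^{k_\ell}$ in $\tilde Q$ is exactly $c_{(k_i)} \cdot \prod_i \lambda_i^{-k_i}$.

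The key step is the Gauss norm principle: for a polynomial over a complete ultrametric field, the maximum of the norms of its coefficients equals the supremum of the polynomial on the closed unit polydisc. Granting this, one immediately obtains $|c_{(k_i)}| \cdot \prod_i |\lambda_i|^{-k_i} \leq \Vert \psi \Vert$, which rearranges to the claimed bound. The main obstacle is precisely this Gauss norm equality, since for $K$ with finite residue field it may fail on the $K$-points of the polydisc (there are simply not enough points to detect each coefficient). The standard workaround is to base-change to a complete valued extension $K'/K$ with infinite residue field and with $\Vert x_i \Vert \in |K'^\times|$ (for example, $K' = \widehat{\overline K}$), so that both the choice of $\lambda_i$ and the Gauss norm equality become available; since the desired inequality is purely numerical and all relevant norms extend canonically, the conclusion descends to the original setting.
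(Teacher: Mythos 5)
Your reduction of the left\nobreakdash-hand side to the coefficient of $t_1^{k_1}\cdots t_\ell^{k_\ell}$ in $Q(t)=\psi(t_1x_1+\cdots+t_\ell x_\ell)$ is the same mechanism as the paper's proof, which inducts on $\ell$ and recovers the coefficients of the one-variable polynomial $P(\lambda)$ (obtained by substituting $x_\ell+\lambda x_{\ell+1}$) from its values on the unit ball; you just treat all variables at once. The genuine gap is in your last step. The uniform bound $|Q(t)|\le \Vert\psi\Vert\cdot\max_i(|t_i|\,\Vert x_i\Vert)^m$ is deduced from $|\psi(y)|\le\Vert\psi\Vert\,\Vert y\Vert^m$ for $y\in E$, so it is only established for $t\in K^\ell$. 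To invoke the Gauss-norm equality over your extension $K'$ you need this bound for $t\in(K')^\ell$, i.e.\ you need the canonical extension of $\psi$ to $E\otimes_K K'$ to still have norm at most $\Vert\psi\Vert$. That is not a formality covered by ``norms extend canonically'': the sup of a homogeneous form on the unit ball can strictly increase under an unramified extension, and the direction you actually need is not descending the conclusion from $K'$ to $K$ but lifting the hypothesis from $K$ to $K'$. Indeed, if $\Vert\psi\Vert$ literally meant $\sup_{y\in E,\,\Vert y\Vert\le 1}|\psi(y)|$, the statement itself would fail: take $K=\Q_2$, $E=\Q_2^2$ with the sup norm and $\psi(x,y)=x^2y-xy^2=xy(x-y)$. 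Then $|\psi|\le 1/2$ on $\Z_2^2$, while the quantity to be bounded for $x_1=e_1$, $x_2=e_2$, $k_1=2$, $k_2=1$ is $\Vert 3\,\varphi(e_1,e_1,e_2)\Vert=1$ (it is the coefficient of $t_1^2t_2$ in $t_1^2t_2-t_1t_2^2$). So the step you wave through is exactly where the content of the lemma lies.

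The resolution is that $\Vert\psi\Vert$ (like the norms $\Vert f_n\Vert$, $\Vert g_n\Vert$, $\Vert h_n\Vert$ of the analytic terms in the reference) must be read as a coefficient-type (Gauss) norm with respect to an orthonormal basis, equivalently as the sup of $|\psi|$ over the unit ball computed over all complete extensions of $K$. Under that convention your argument closes --- in fact no base change is needed: the Gauss norm of $Q$ is bounded by $\Vert\psi\Vert\prod_i\Vert x_i\Vert^{k_i}$ directly, since each coefficient of $Q$ is a sum of products of coefficients of $\psi$ with entries of the $x_i$. Note that the paper's own induction relies on the same principle (it bounds the coefficients of $P(\lambda)$ by its sup on the unit ball of $\O_K$, which over a finite residue field requires the same convention), so your route is not essentially different from the paper's; but as written, your base-change step is a gap rather than a standard workaround, and it should be replaced by an explicit appeal to the coefficientwise meaning of $\Vert\psi\Vert$.
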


\begin{proof}
It is enough to prove that
$$\Big\Vert \binom m {k_1 \,\, k_2 \,\, \cdots \,\, k_\ell} \cdot
\varphi(x_1, \ldots, x_1, \ldots, x_\ell, \ldots,
x_\ell) \Big\Vert \leq \Vert \psi \Vert$$
provided that all the $x_i$'s have norm at most $1$. We proceed by 
induction on 
$\ell$. The $\ell = 1$ case follows directly from
the definition of $\Vert \psi \Vert$. 
We now pick $(\ell+1)$ integers $k_1, \ldots, k_{\ell+1}$ whose sum 
equals $m$, together with $(\ell+1)$ elements $x_1, \ldots, x_{\ell+1}$
lying in the unit ball of $E$.
We also consider a new variable $\lambda$ varying in
$\O_K$. We set $x'_i = x_i$, $k'_i = k_i$ when $i < \ell$ and $x'_\ell 
= x_\ell + \lambda x_{\ell+1}$ and $k'_\ell = k_\ell + k_{\ell+1}$. By the 
induction hypothesis, we know that the inequality
$$\Big\Vert \binom m {k'_1 \,\, \cdots \,\, k'_\ell} \cdot
\varphi(x'_1, \ldots, x'_1, \ldots, x'_\ell, \ldots, x'_\ell) \Big\Vert
\leq \Vert \psi \Vert$$
holds for all $\lambda \in K$. Furthermore, the LHS of the inequality
is a polynomial $P(\lambda)$ of degree $k'_\ell$ whose coefficient in 
$\lambda^j$ is
$$\binom m {k'_1 \,\, \cdots \,\, k'_\ell} \cdot
\binom {k'_\ell} {j} \cdot
\varphi(\underline x_j) = 
\binom m {k_1 \,\, \cdots \,\, k_{\ell-1} \,\, j} \cdot
\varphi(\underline x_j)$$
with
$\underline x_j = (x_1, \ldots, x_1, \ldots, x_{\ell+1}, \ldots, 
x_{\ell+1})$
where $x_i$ is repeated $k_i$ times if $i < \ell$ and $x_\ell$ 
(resp. $x_{\ell+1}$) is repeated $j$ times (resp. $k'_\ell - j$ times).
Since $\Vert P(\lambda) \Vert \leq \Vert \psi \Vert$ for all $\lambda$ in 
the unit ball, the norm of all its coefficients must also be at most $\Vert \psi
\Vert$. From the coefficient of $\lambda^{k_\ell}$, the result follows.
\end{proof}

\subsection{Bounding a growing function}

We return to the setting of Proposition \ref{prop:boundLambdaf}.
Let $f = \sum_{n \geq 0} f_n$, $g = \sum_{n \geq 0} g_n$ and
$h = \sum_{n \geq 0} h_n$ be the analytic expansions of
$f$, $g$ and $h$.
Here $f_n$, $g_n$ and $h_n$ are the restrictions to the diagonal of 
some symmetric $n$-linear forms $F_n$, $G_n$ and $H_n$, respectively.
We recall that $\Lambda(f)$ 
is the Legendre transform of the Newton polygon $\NP(f)$ defined in Section \ref{sec:matrices}
\cite[Proposition 3.9]{caruso-roe-vaccon:14a}, and that $\alpha$ is a
real number such that $\Vert n! \Vert \geq e^{-\alpha n}$ for all positive integers $n$.

\begin{lem}
\label{lem:boundLambdaf}
We keep the above notation. 
If $(a,b)$ satisfies
$b \geq a + \Lambda(g)\big( \! \max(b, \, \Lambda(h) (a)) \big)$
then $b \geq \Lambda(f)(a - \alpha)$.
\end{lem}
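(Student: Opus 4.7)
The strategy is to establish by strong induction on $n$ the strengthened inequality
\[
\Vert f_n \Vert \leq \frac{e^{b - na}}{\Vert n! \Vert},
\]
from which the lemma follows at once: the estimate $\Vert n! \Vert \geq e^{-\alpha n}$ gives $\Vert f_n \Vert \leq e^{b - n(a - \alpha)}$, i.e.\ $n(a - \alpha) + \log \Vert f_n \Vert \leq b$, and taking the supremum over $n$ yields $\Lambda(f)(a - \alpha) \leq b$. The shift by $\alpha$ in the conclusion is precisely the arithmetic cost of the successive divisions by integers that arise when integrating the differential equation.

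For the inductive step, I would use that $f$ and $h$ both vanish at $0$, so their analytic expansions start at degree $1$. Extracting the degree-$(n-1)$ homogeneous component in the identity $f' = g \circ (f, h)$, and noting that the corresponding component of $f'$ is $n \cdot F_n(x, \ldots, x, \cdot)$, one gets
\[
\Vert f_n \Vert \leq \frac{1}{\Vert n \Vert} \cdot \Vert (g \circ (f, h))_{n-1} \Vert.
\]
Viewing $V \times W$ with its max norm and $(f, h)$ as a single locally analytic function vanishing at $0$, Proposition~\ref{prop:boundhr} applied to the composition of $g$ with $(f, h)$ yields
\[
\Vert (g \circ (f, h))_{n-1} \Vert \leq \sup_{m, (n_i)} \Vert g_m \Vert \cdot \prod_{i=1}^m \max(\Vert f_{n_i} \Vert, \Vert h_{n_i} \Vert),
\]
the supremum running over $m \geq 0$ and positive integer sequences $(n_i)$ summing to $n - 1$.

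Setting $c = \max(b, \Lambda(h)(a))$, the induction hypothesis on $f_{n_i}$ combined with the definition of $\Lambda(h)$ (and the trivial bound $\Vert n_i! \Vert \leq 1$) gives
\[
\max(\Vert f_{n_i} \Vert, \Vert h_{n_i} \Vert) \leq \frac{e^{-n_i a + c}}{\Vert n_i! \Vert}.
\]
Combined with $\Vert g_m \Vert \leq e^{\Lambda(g)(c) - mc}$, the $e^{mc}$ factors cancel. To collect the factorial denominators, one uses that the multinomial coefficient $(n-1)!/(n_1! \cdots n_m!)$ is an integer, whence $\Vert (n-1)! \Vert \leq \prod_i \Vert n_i! \Vert$. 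Assembling everything gives $\Vert f_n \Vert \leq e^{\Lambda(g)(c) + a - na}/\Vert n! \Vert$, which matches the target bound exactly when $b \geq a + \Lambda(g)(c)$---precisely the hypothesis of the lemma. The main subtlety lies in identifying the correct strengthened form: a direct induction on the naive bound $\Vert f_n \Vert \leq e^{b - n(a - \alpha)}$ fails because it is not preserved by the recursion, whereas carrying the $1/\Vert n! \Vert$ factor allows each division by $n$ to neatly promote $(n-1)!$ to $n!$ in the denominator.
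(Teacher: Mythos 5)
Your proposal is correct and follows essentially the same route as the paper: the paper also extracts the degree-$r$ component of $f' = g\circ(f,h)$ via $\Vert n f_n\Vert \le \Vert f'_n\Vert$, applies Proposition~\ref{prop:boundhr} to the pair $(f,h)$, and runs the induction on the factorially weighted quantities $u_r = \Vert r!\,f_r\Vert \le e^{-ar+b}$ (identical to your strengthened bound $\Vert f_n\Vert \le e^{b-na}/\Vert n!\Vert$), using the integrality of the multinomial coefficients exactly as you do. The only cosmetic difference is that the paper folds the hypothesis into the bound $\Vert g_m\Vert \le e^{-\max(b,\Lambda(h)(a))m + b - a}$ at the outset rather than invoking it at the end.
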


\begin{proof}
We have $f' = \sum_{n \geq 0} f'_n$ where
$$f'_n : U \to \mathcal L(E,F), \quad
x \mapsto \big(h \mapsto n \cdot F_n(h, x, x, \ldots, x)\big).$$
Taking $h = x$, we find
$\Vert f'_n \Vert \geq \Vert n f_n \Vert = 
\Vert n \Vert \cdot \Vert f_n \Vert$.
Combining this with Proposition \ref{prop:boundhr}, we get
$$\Vert (r+1) f_{r+1} \Vert \leq
  \sup_{m, (n_i)} \Vert g_m \Vert \cdot 
  \prod_{i=1}^m \max(\Vert f_{n_i} \Vert, \Vert h_{n_i} \Vert)$$
for all nonnegative integers $r$,
where the supremum runs over all pairs $(m, (n_i))$ where $m$
is a nonnegative integer and $(n_i)_{1 \leq i \leq m}$ is a sequence of
length $m$ of nonnegative integers such that $n_1 + \ldots + n_m = r$.
We set $u_r = \Vert r! f_r \Vert$. Multiplying the above inequality by
$\Vert r! \Vert$, we obtain:
\begin{equation}
\label{eq:boundurrec}
u_{r+1} \leq
  \sup_{m, (n_i)} \Vert g_m \Vert \cdot 
  \prod_{i=1}^m \max(u_{n_i}, \Vert n_i! h_{n_i} \Vert)
\end{equation}
since the multinomial coefficient $\binom r {\!n_1 \, \cdots \, n_m\!}$
is an integer and hence has norm at most $1$.
We now pick two real numbers $a$ and $b$ satisfying the hypothesis of 
the Lemma. Set $d = \Lambda(h)(a)$. Going back to the definitions of 
$\Lambda (h)$ and Legendre transform, we get $\Vert h_n \Vert \leq e^{- 
a n + d}$ for all $n$.
Similarly, from our hypothesis on $(a,b)$, we find
$\Vert g_m \Vert \leq e^{-\!\max(b,d)\cdot m + b - a}$ for all $m$.
We are now ready to prove $u_r \leq e^{-ar + b}$ by induction on $r$.
When $r = 0$, it is obvious because $u_0$ vanishes. Otherwise, the
induction follows from
\begin{align*}
u_{r+1} 
& \leq \sup_{m, (n_i)}
    e^{ -\max(b,d)\cdot m + b - a + \sum_{i=1}^m (-a n_i + \max(b,d))} \\
& = e^{ b - a - a r } = e^{ -a (r+1) + b}.
\end{align*}
From the definition of $u_r$, we obtain 
$\Vert f_r \Vert \leq u_r \cdot \Vert r! \Vert^{-1} \leq e^{-(a - \alpha) 
r + b}$. Thus $b \geq \Lambda(f)(a - \alpha)$.
\end{proof}


We can now conclude the proof of Proposition \ref{prop:boundLambdaf} as 
follows. Given $a \in \R$ and $b = a + \Lambda_g \circ \Lambda_h(a)$, we 
have to prove that $\Lambda(f)(a-\alpha) \leq b$ provided that $b \leq 
\nu$. Thanks to Proposition \ref{prop:boundLambdaf}, it is enough to 
check that such pairs $(a,b)$ satisfy the hypothesis of Lemma
\ref{lem:boundLambdaf}. Clearly:
$b \geq a + \Lambda(g) \circ \Lambda(h)(a)$
since $\Lambda_g \geq \Lambda(g)$, $\Lambda_h \geq \Lambda(h)$ and
$\Lambda_g$ is nondecreasing. Furthermore, from $b \leq \nu$, we get
$\Lambda_g(b) = \min_{x \in \R} \Lambda_g(x) \leq \Lambda_g \circ 
\Lambda_h(a)$, from which we derive 
$a + \Lambda_g(b) \leq a + \Lambda_g \circ \Lambda_h(a) = b$.

{\raggedright
\bibliographystyle{plain}
\bibliography{Biblio,extras}
}

\end{document}